\documentclass[11pt]{article}

\usepackage[a4paper,left=1.05in,right=1.05in,top=0.85in,bottom=0.9in]{geometry}
\usepackage[T1]{fontenc}
\usepackage{lmodern}
\usepackage{amsmath,amssymb,amsthm}
\usepackage{microtype}
\usepackage{enumitem}
\usepackage{hyperref}
\hypersetup{colorlinks=true,linkcolor=black,citecolor=black,urlcolor=black}

\newtheorem{theorem}{Theorem}[section]
\newtheorem{lemma}[theorem]{Lemma}

\DeclareMathOperator{\ex}{ex}

\setlist[enumerate]{leftmargin=2.1em,itemsep=0.3em,topsep=0.4em}

\usepackage{authblk}

\title{\bf Generalized Tur\'an problems for shorter even cycles}
\author{
Zhen Liu\footnote{Email: 1552580575@qq.com},
~Chuanshu Wu\footnote{Email: cswu97@126.com (Corresponding author)}\\
{\small Center for Discrete Mathematics, Fuzhou University, Fujian, 350003, China}}
\date{\today}

\begin{document}
\maketitle

\begin{abstract}
For graphs \(H\) and \(F\), let \(\ex(n,H,F)\) denote the maximum number of copies of \(H\) in an \(n\)-vertex \(F\)-free graph. Gerbner, Győri, Methuku, and Vizer proved that \(\ex(n,C_{2\ell},C_{2k})=\Theta(n^\ell)\) for \(k>\ell\ge2\). They determined the leading term for \(\ell=2\), but for \(k>\ell\ge3\) their general lower and upper bounds had different leading constants, leaving open the problem of closing this gap. We solve this problem by showing that, for every \(k>\ell\ge2\),
\[
\ex(n,C_{2\ell},C_{2k})
=
\left(\frac{(k-1)_\ell}{2\ell}+o(1)\right)n^\ell,
\]
where $(k-1)_\ell=(k-1)(k-2)\cdots(k-\ell)$.
\end{abstract}

\vspace{1em}
\noindent\textbf{Keywords:} generalized Tur\'an number, even cycle, extremal graph theory.\\
\noindent\textbf{2020 Mathematics Subject Classification:} 05C35, 05C38.

\section{Introduction}

Throughout this paper, all graphs are finite and simple, and copies are
not required to be induced. For a graph $F$, a graph $G$ is called
$F$-free if it contains no copy of $F$ as a subgraph. The classical
Tur\'an number is
\[
\ex(n,F)
=
\max\{|E(G)|:|V(G)|=n\text{ and $G$ is $F$-free}\}.
\]

More generally, for graphs $H$ and $F$, let $N(H,G)$ denote the number
of unlabeled copies of $H$ in $G$, and define
\[
\ex(n,H,F)
=
\max\{N(H,G):|V(G)|=n\text{ and $G$ is $F$-free}\}.
\]
Thus $\ex(n,K_2,F)=\ex(n,F)$, where $K_r$ denotes the complete graph
on $r$ vertices. The clique case $\ex(n,K_s,K_t)$ was determined by
Zykov~\cite{Zykov}, while the systematic study of generalized Tur\'an
numbers for general $H$ and $F$ was initiated by Alon and
Shikhelman~\cite{AlonShikhelman}. For a recent comprehensive survey,
including a discussion of cycle-counting problems, we refer the reader
to Gerbner and Palmer~\cite{GerbnerPalmer}.

For $r\ge3$, let $C_r$ denote the cycle of length $r$. Generalized
Tur\'an problems involving cycles have received considerable attention.
Bollob\'as and Gy\H{o}ri~\cite{BollobasGyori} proved the early result
\[
\ex(n,C_3,C_5)=\Theta(n^{3/2}).
\]
More generally, Gishboliner and Shapira~\cite{GishbolinerShapira}
determined the order of magnitude of $\ex(n,C_r,C_s)$ for all distinct
fixed integers $r,s$ with $r>3$.

We consider the case in which both cycles are even. Throughout the
paper, $k$ and $\ell$ are fixed integers, and $n$ is sufficiently large. For positive
integers $a$ and $t$, write
$
(a)_t=a(a-1)\cdots(a-t+1).
$ The main starting point is the following
result of Gerbner, Gy\H{o}ri, Methuku, and Vizer~\cite{GGMV}.

\begin{theorem}[Gerbner, Gy\H{o}ri, Methuku, and Vizer~\cite{GGMV}]
\label{thm:GGMV-intro}
Let $k>\ell\ge2$. If $\ell=2$, then
\[
\ex(n,C_4,C_{2k})
=
\left(
\frac{(k-1)(k-2)}{4}+o(1)
\right)n^2.
\]
If $\ell\ge3$, then
\[
\left(
\frac{(k-1)_\ell}{2\ell}+o(1)
\right)n^\ell
\le
\ex(n,C_{2\ell},C_{2k})
\le
\left(
\frac{2^{\ell-2}(k-1)^\ell}{2\ell}+o(1)
\right)n^\ell.
\]
\end{theorem}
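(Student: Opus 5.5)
The statement is the cited theorem of Gerbner, Gy\H{o}ri, Methuku, and Vizer. I would prove it in two parts: a construction giving the lower bound for every $\ell\ge2$, and a counting argument giving the upper bounds. For $\ell=2$ the upper bound has to be sharp, while for $\ell\ge3$ it may lose a factor $2^{\ell-2}$.

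\textbf{Lower bound.} Take $G=K_{k-1,n-k+1}$, with parts $A$ of size $k-1$ and $B$ of size $n-k+1$. Every cycle in $G$ alternates between $A$ and $B$, so it has length at most $2(k-1)<2k$; hence $G$ is $C_{2k}$-free. A copy of $C_{2\ell}$ is determined by a cyclic alternating sequence $a_1b_1a_2b_2\cdots a_\ell b_\ell$ of distinct vertices with $a_i\in A$ and $b_i\in B$. There are $(k-1)_\ell\,(n-k+1)_\ell$ such labelled sequences. Each cycle arises from exactly $2\ell$ of them, coming from the $\ell$ rotations that keep an $A$-vertex first and the $2$ orientations. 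Therefore
\[
N(C_{2\ell},G)=\frac{(k-1)_\ell}{2\ell}\,n^\ell(1+o(1)).
\]

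\textbf{Upper bound.} Let $G$ be an $n$-vertex $C_{2k}$-free graph. Classify the vertices as \emph{high} if their degree is at least $\varepsilon n$ and \emph{low} otherwise. By the Bondy--Simonovits bound, $e(G)=O(n^{1+1/k})$, so there are only $O(n^{1/k}/\varepsilon)=o(n)$ high vertices. First I would show that the $C_{2\ell}$'s not alternating between high and low vertices number $o(n^\ell)$. Write each copy as $v_1\cdots v_{2\ell}$. The bound on $e(G)$, combined with the count $O(n^2\cdot o(1))$ of paths of length two with a low middle vertex (using the Erd\H{o}s--Simonovits supersaturation or a direct $\sum d(v)^2$ estimate), shows that a $C_{2\ell}$ containing two consecutive low vertices, or fewer than $\ell$ high vertices, contributes $o(n^\ell)$.

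So essentially every copy has the form $h_1x_1h_2x_2\cdots h_\ell x_\ell$, with the $h_i$ high and the $x_i$ low. Consider the auxiliary multigraph on the high vertices in which each pair $h,h'$ is weighted by $c(h,h')$, the number of common low neighbours. The number of such cycles is at most
\[
\frac{1}{2\ell}\sum_{(h_1,\dots,h_\ell)}\ \prod_i c(h_i,h_{i+1}).
\]
The key structural input is that the pairs with $c(h,h')\ge 2k$ (``fat'' pairs) form a graph $F$ on the high vertices that has no $k$-cycle and, more strongly, no long paths or cycles that could be rerouted through distinct low vertices into a $C_{2k}$. Pairs with $c<2k$ contribute $o(n^\ell)$, since only $o(n)$ high vertices exist.

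Also, $\sum_{h'}c(h,h')\le d(h)\,\cdot$ const. More importantly, each low vertex $x$ has at most $k-1$ high neighbours lying in a common fat component: if $x$ had $k$ of them, the fat edges would route through them into a $C_{2k}$. For $\ell=2$, the sum $\sum_{h,h'}\binom{c(h,h')}{2}$ counts pairs of low vertices with two common high neighbours. For fixed $x,y$, the number of common high neighbours in one fat component is at most $k-1$, which yields $\binom{k-1}{2}$ per pair and thus $\bigl((k-1)(k-2)/4+o(1)\bigr)n^2$. For $\ell\ge3$, bounding $\prod c(h_i,h_{i+1})$ for each tuple of low vertices $x_1,\dots,x_\ell$ by choosing the $h_i$ in $N(x_{i-1})\cap N(x_i)$ gives at most $(k-1)^\ell$ choices, up to a loss of $2^{\ell-2}$ from components/orientations. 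Dividing by $2\ell$ and multiplying by $n^\ell$ low tuples gives the stated bound.

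\textbf{Main obstacle.} The hard step is the structural lemma that a low vertex sees at most $k-1$ high vertices of a fat component, and that fat components interact trivially. This requires an embedding argument building a $C_{2k}$ greedily through $\ge 2k$ common neighbours. I would try first a Pósa-type path-extension argument inside the fat graph.
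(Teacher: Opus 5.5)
Your lower bound is correct and is exactly the paper's $K_{k-1,n-k+1}$ computation. Your overall architecture for the upper bound (fat pairs, the fat graph being $C_k$-free, double counting over tuples of middle vertices) parallels the paper's, but it has a genuine gap at its central step.

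\textbf{The structural claim is false.} The claim that a low vertex has at most $k-1$ high neighbours in a common fat component fails for $k\ge4$. Take a vertex $c$ and vertices $h_1,\dots,h_k$. For each $i$ add a set $Y_i$ of $2k$ vertices joined to both $c$ and $h_i$. Add a vertex $x$ joined to every $h_i$, and attach $\lceil\varepsilon n\rceil$ pendant vertices at $c$ and at each $h_i$. Every cycle has length $4$ or $6$, so the graph is $C_{2k}$-free. The fat graph is a star centred at $c$, yet the low vertex $x$ sees all $k$ of its leaves. Rerouting through fat edges needs a fat path on exactly $k$ vertices between two neighbours of $x$; in the proof of Lemma~\ref{lem:high-codegree}(ii) this path is taken inside $N(x)$. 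Lying in one fat component gives no such path. What the argument really controls is the number of fat edges inside $N(x)$, not the number of neighbours in a component.

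\textbf{Consequences for your counting.} Even granting a per-component bound, the common high neighbourhood of two low vertices may meet several fat components, so your $\ell=2$ bound of $\binom{k-1}{2}$ per pair does not follow. Your count of $(k-1)^\ell$ high tuples per low tuple would need $N(x_{i-1})\cap N(x_i)$ to contain at most $k-1$ high vertices, which is false. For example, $K_{2,s}$ with $\lceil\varepsilon n\rceil$ pendants at each vertex of the $s$-side has only $4$-cycles, yet its two low vertices share $s$ high neighbours. Finally, the factor $2^{\ell-2}$ ``from components/orientations'' is not derived from anything.

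\textbf{The missing idea: cliques, not components.} A non-fat pair has fewer than $2k$ common neighbours. So if there are $h$ high vertices, all but at most $(2k-1)\binom{h}{2}$ vertices $x$ have high neighbourhood $R_x$ forming a clique in the fat graph. The exceptional $R_x$ can be split into two-vertex cliques at negligible cost. Now take cliques $Q_{a_1},\dots,Q_{a_\ell}$ for which some $\ell$-cycle $v_1\cdots v_\ell$ has $v_j,v_{j+1}\in Q_{a_j}$. The union $U$ of the sets $Q_{a_{j-1}}\cap Q_{a_j}$ (indices mod $\ell$) has at most $k-1$ vertices. Otherwise, extra vertices of $U$ can be inserted between consecutive cycle vertices to produce a $C_k$ in the $C_k$-free fat graph (Lemma~\ref{lem:cliques}). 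This gives at most $(k-1)_\ell$ compatible high tuples per tuple of cliques, hence the sharp constant $(k-1)_\ell/(2\ell)$. That is how the paper obtains the $\ell\ge3$ upper bound in this statement: it is a consequence of Theorem~\ref{thm:main}, since $(k-1)_\ell\le 2^{\ell-2}(k-1)^\ell$. The case $\ell=2$ is simply cited from Gerbner et al.

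\textbf{Your reduction step is also thinner than stated.} The direct estimate $\sum_{v\text{ low}}d(v)^2\le\varepsilon n\cdot 2e(G)=O(\varepsilon n^{2+1/k})$ is not even $O(n^2)$. You would need the bound $\sum_{v\in S}d(v)=O(n|S|^{1/k})$ established in the proof of Lemma~\ref{lem:high-codegree}(ii). This shows only $O_\varepsilon(1)$ vertices have degree at least $\varepsilon n$, and a dyadic summation then gives $\sum_{v\text{ low}}d(v)^2=O(\varepsilon^{(k-2)/(k-1)}n^2)$. Even then, the step from few low-centred paths of length two to few copies of $C_{2\ell}$ with a low--low edge is unexplained. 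The paper avoids degree thresholds entirely. Lemma~\ref{lem:nonfat} shows that all but $O(e(G)^{\ell-1})=o(n^\ell)$ copies have a fat alternating side. The trace/Frobenius estimate in Lemma~\ref{lem:weighted} then truncates to codegrees at least $n/\log n$, i.e.\ to the polylogarithmic vertex set $V(H_q)$.
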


Thus
\[
\ex(n,C_{2\ell},C_{2k})=\Theta(n^\ell)
\qquad (k>\ell\ge2).
\]
The lower bound above comes from a natural complete bipartite
construction. Let $K_{a,b}$ denote the complete bipartite graph with
parts of sizes $a$ and $b$. Since every $C_{2k}$ in a bipartite graph
uses $k$ vertices from each part, $K_{k-1,n-k+1}$ is $C_{2k}$-free.
Moreover,
\[
N(C_{2\ell},K_{k-1,n-k+1})
=
\frac{(k-1)_\ell(n-k+1)_\ell}{2\ell}
=
\left(
\frac{(k-1)_\ell}{2\ell}+o(1)
\right)n^\ell.
\]
For $k>\ell\ge3$, however, the lower and upper bounds in
Theorem~\ref{thm:GGMV-intro} have different leading constants.
Gerbner et al.\ explicitly asked in the concluding remarks of
\cite{GGMV} whether this gap could be closed.

More precise results are known for the first case $\ell=3$ and $k=4$.
For graphs $H$ and $F$, define
\[
\ex_{\mathrm{bip}}(n,H,F)
=
\max\{N(H,G):|V(G)|=n,\,
G\text{ is bipartite and }F\text{-free}\}.
\]
Gerbner et al.~\cite{GGMV} proved
\[
\ex_{\mathrm{bip}}(n,C_6,C_8)
=
n^3+O(n^{5/2}).
\]
This was strengthened to an exact result by Gy\H{o}ri, He, Lv, Salia,
Tompkins, Varga, and Zhu~\cite{GyoriEtAl}.

\begin{theorem}[Gy\H{o}ri et al.~\cite{GyoriEtAl}]
\label{thm:GyoriEtAl}
For every positive integer $n$,
\[
\ex_{\mathrm{bip}}(n,C_6,C_8)
=
6\binom{n-3}{3}.
\]
Moreover, the unique extremal graph is $K_{3,n-3}$.
\end{theorem}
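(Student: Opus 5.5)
The plan is induction on $n$, removing one vertex at a time. The identity
\[
6\binom{n-3}{3}-6\binom{n-4}{3}=6\binom{n-4}{2}
\]
reduces the statement to the following claim.

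\textbf{Key claim.} Every $C_8$-free bipartite graph $G$ on $n$ vertices has a vertex $v$ lying in at most $6\binom{n-4}{2}$ copies of $C_6$.

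Granting the claim, $N(C_6,G)\le N(C_6,G-v)+6\binom{n-4}{2}$. Then
\[
N(C_6,G)\le 6\binom{n-4}{3}+6\binom{n-4}{2}=6\binom{n-3}{3}
\]
by induction. The base cases $n\le 6$ are checked directly: a bipartite graph on at most $6$ vertices has at most the $6$ hexagons of $K_{3,3}$.

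The claim is sharp in $K_{3,n-3}$. A vertex $v$ in the large part lies in exactly $6\binom{n-4}{2}$ hexagons: choose the two other large-part vertices, and then there are $6$ hexagons on the resulting copy of $K_{3,3}$ through $v$.

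To prove the claim, fix the bipartition $(X,Y)$. For a vertex $v$, the hexagons through $v$ correspond to pairs of neighbours $x,y$ of $v$ together with a path $x\,a\,z\,b\,y$ of length $4$ avoiding $v$. $C_8$-freeness forbids any path of length $6$ between two neighbours of $v$ that avoids $v$. This strongly limits how such length-$4$ paths can be extended. I would first use this to show a structural lemma: if the hexagons through vertices of the larger side are numerous, then the vertices ``of high hexagon-degree'' form a set $S$ of at most $3$ vertices on one side. The mechanism is that four vertices of one class, each lying in many hexagons, would let two hexagons be glued along a path into a $C_8$.

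I would then pick $v$ to be a vertex outside $S$ of minimum degree on the large side. Every hexagon through $v$ has its three vertices of the other colour essentially inside $S$, up to lower-order exceptions. This gives at most $6\binom{n-4}{2}$ hexagons through $v$, by counting the choices of the two other same-side vertices and the $6$ cyclic arrangements.

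The main obstacle is making this structural lemma exact rather than asymptotic, since an $O(n^{3/2})$ error destroys the exact induction. I would first try a direct local argument. Take a hexagon $C$ through a candidate $v$ that uses a vertex $w$ of the other colour outside $S$. Use the many hexagons through $S$ to find a path of length $4$ joining two vertices of $C$ at distance $2$ on $C$, with its interior disjoint from $C$. Replacing the length-$2$ arc of $C$ by this path produces a $C_8$, which is the contradiction.

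For uniqueness, I would trace the equality cases through the induction. Equality forces $G-v$ to be $K_{3,n-4}$ and $v$ to lie in exactly $6\binom{n-4}{2}$ hexagons. Together with $C_8$-freeness, this forces $v$ to be adjacent to exactly the three-vertex side, so $G=K_{3,n-3}$.
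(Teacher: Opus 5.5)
The paper does not prove this statement; it quotes it from Gy\H{o}ri et al.\ as background, and its own fat-pair/codegree machinery only gives the asymptotic bound $(1+o(1))n^3$. So your outline has to stand on its own.

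The outer frame is sound. You have $N(C_6,G)=N(C_6,G-v)+h(v)$, where $h(v)$ is the number of hexagons through $v$, and the identity $\binom{n-3}{3}-\binom{n-4}{3}=\binom{n-4}{2}$ closes the induction. The equality tracing also works, with two caveats. For $n=8$, a vertex joined to three vertices of the $4$-class of $K_{3,4}$ also lies in exactly $36=6\binom{4}{2}$ hexagons, so $C_8$-freeness is genuinely needed there. And uniqueness is false for $n\le5$, so that part of the induction must start at $n=6$.

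The gap is the key claim itself. An \emph{exact} bound $\min_v h(v)\le 6\binom{n-4}{2}$ for \emph{every} $C_8$-free bipartite graph and every $n$ carries the whole difficulty of the theorem, and you do not prove it. As you note yourself, your argument stops at an asymptotic version, and any additive lower-order error in $h(v)$ breaks the induction.

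Concretely, three steps fail or are missing.

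First, the structural lemma has no threshold, and its mechanism is false as stated. In $K_{3,n-3}$ all $n-3$ vertices of the large class lie in $6\binom{n-4}{2}=\Theta(n^2)$ hexagons. In a disjoint union $K_{3,m}\cup K_{3,m'}$ with $m,m'=\Theta(n)$, six vertices of one class lie in $\Theta(n^3)$ hexagons. Neither graph contains $C_8$. If ``high'' means ``more than $6\binom{n-4}{2}$'', the lemma \emph{is} the key claim. If it means at least $cn^3$, you only get $h(v)\le 6\binom{n-4}{2}$ plus a lower-order term.

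Second, averaging disposes only of graphs with at most $n\binom{n-4}{2}\approx n^3/2$ hexagons. That is about half the extremal number, which does not force a single triple of hubs. So $S$ and ``the large side'' are not well defined for the graphs you must handle. Moreover, any stability argument would prove the claim only for $n\ge n_0$, leaving $7\le n<n_0$ uncovered, although the statement is for all $n$.

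Third, your rerouting trick only eliminates hexagons $C$ through $v$ with $|V(C)\cap S|=2$. There the two hubs on $C$ are at distance $2$ on $C$, and a detour $s_1ys_3y's_2$ through the third hub, with $y,y'\notin V(C)$, turns $C$ into a $C_8$. It says nothing about hexagons through $v$ meeting $S$ in at most one vertex. For instance, a hexagon glued to $K_{3,m}$ at a single vertex $v$ of the large class creates no $C_8$ at all. Such hexagons can only be excluded through the choice of $v$. You give no argument that your choice (a minimum-degree vertex of the large side outside $S$) leaves \emph{zero} of them rather than a lower-order number.
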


Recently, Chen and Deng~\cite{ChenDeng} determined the
corresponding unrestricted problem.

\begin{theorem}[Chen and Deng~\cite{ChenDeng}]
\label{thm:ChenDeng-intro}
For all sufficiently large $n$,
\[
\ex(n,C_6,C_8)
=
6\binom{n-3}{3}+12(n-5).
\]
\end{theorem}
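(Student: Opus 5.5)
Theorem~\ref{thm:ChenDeng-intro} is quoted from \cite{ChenDeng} and is not reproved here. The following is only a sketch of how I would approach it; the exact lower-bound construction is left open.

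\textbf{Lower bound.} I would start from $K_{3,n-3}$, with small part $A=\{a_1,a_2,a_3\}$ and large part $B$. It contributes $6\binom{n-3}{3}$ copies of $C_6$. I would then add a bounded number of edges that keep the graph $C_8$-free and create about $12n$ new copies of $C_6$.

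The useful observation is structural. In any cycle of $K_{3,n-3}$ plus extra edges, the $B$-vertices split into maximal runs between consecutive $A$-vertices. There are at most $3$ such runs, so a cycle has length at most $3+(\text{total number of }B\text{-vertices in these runs})$. Edges inside $B$ lengthen runs, so they must be added carefully. For example, a single edge $b_1b_2$ inside $B$ allows cycles of length at most $7$. Edges inside $A$ only shorten cycles.

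I would search among such small modifications for one where each of the $n-5$ vertices of $B\setminus\{b_1,b_2\}$ lies in exactly $12$ new hexagons. A natural candidate is $K_{3,n-3}$ plus one edge inside $B$ and suitable edges inside $A$. For each candidate I would recheck $C_8$-freeness with the run argument above.

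\textbf{Upper bound, stability.} Let $G$ be $C_8$-free on $n$ vertices with $N(C_6,G)\ge 6\binom{n-3}{3}$. I would use the counting behind the upper bound of Theorem~\ref{thm:GGMV-intro} (the $\ell=3$, $k=4$ case) to show that $G$ is $o(n^2)$-close to $K_{3,n-3}$. Concretely, I want a set $A$ of three vertices whose common neighbourhood $B$ has size $n-o(n)$.

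I would then bootstrap this to exact structure, in the style of the proof of Theorem~\ref{thm:GyoriEtAl}:
\begin{enumerate}
\item Every vertex outside $A$ has at most $3+O(1)$ neighbours. Any vertex with two neighbours in $B$ together with three vertices of $A$ quickly creates a $C_8$, because common neighbours are abundant.
\item Hence, after the deletion below, the graph $G-A$ has only $O(1)$ edges, and every vertex of $B$ is adjacent to all of $A$. Otherwise deleting its missing edges and re-adding edges to $A$ strictly increases the $C_6$-count, which contradicts maximality.
\end{enumerate}

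\textbf{Exact count and main obstacle.} With the structure fixed, $N(C_6,G)$ equals $6\binom{n-3}{3}$ plus a sum over the $O(1)$ extra edges of hexagons through them. Each such term is linear in $n$. The main obstacle is this final finite optimisation: among $C_8$-free configurations of boundedly many extra edges attached to $K_{3,n-3}$, show that the linear coefficient is at most $12$. The constant must also not exceed $-60$, matching the constant in $12(n-5)$.

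I would handle this by case analysis on the extra edges, splitting edges inside $A$ from edges inside $B$ or between $B$ and outside vertices. In each case I would use the run argument to bound how many $B$-vertices an extra edge can lie on a common hexagon with. I expect this case analysis to be the laborious heart of the proof.
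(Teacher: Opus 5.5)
The paper only quotes this result from Chen and Deng, so there is no proof to compare with. Judged on its own, your sketch has a genuine gap at the stability step.

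\textbf{Stability.} The counting behind the upper bound of Theorem~\ref{thm:GGMV-intro} gives, for $(\ell,k)=(3,4)$, only $N(C_6,G)\le(9+o(1))n^3$, since $2^{\ell-2}(k-1)^\ell/(2\ell)=9$. The sharp $n^3+O(n^{5/2})$ of Gerbner et al.\ is proved only for bipartite hosts. A bound off by a factor of $9$ cannot force any structure on a graph with $N(C_6,G)\ge 6\binom{n-3}{3}$. Also, ``$o(n^2)$-close to $K_{3,n-3}$'' is vacuous, because every $C_8$-free graph has $O(n^{5/4})$ edges.

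What you need is a $3$-set $A$ whose common neighbourhood $B$ has size $n-o(n)$. One way to get it is the equality case of Lemma~\ref{lem:cliques}, inside the proof of Lemma~\ref{lem:weighted} with $\ell=3$, $k=4$. If $N(C_6,G)\ge(1-o(1))n^3$, there are at least $(6-o(1))p^3$ compatible pairs. Each index triple has at most $6$ compatible tuples, so almost every triple has exactly $6$. That forces $Q_{a_1}\cap Q_{a_2}=Q_{a_2}\cap Q_{a_3}=Q_{a_3}\cap Q_{a_1}=U$ with $|U|=3$. Fixing a typical pair $(a_1,a_2)$ then gives one $3$-set contained in $R_x$ for $(1-o(1))n$ vertices $x$; two-vertex members of the family cannot contain it.

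\textbf{Cleanup.} The passage to exact structure is also unjustified, and this is where the real work lies.

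In Step~1, your $C_8$ argument only shows that each vertex outside $A$ has at most one neighbour in $B$. It says nothing about neighbours in $C=V(G)\setminus(A\cup B)$, which may have up to $o(n)$ vertices.

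In Step~2, the swap (delete the edges at $v$, join $v$ to $A$) does preserve $C_8$-freeness. A new $C_8$ would need a path of length $6$ between two vertices of $A$, and closing that path through an unused vertex of $B$ gives a $C_8$ in $G$. But the swap need not increase the count. A vertex of $C$ may be a hub of a copy of $K_{3,m}$ in $G[C]$ with $n^{2/3}\ll m=o(n)$, and then it lies in about $m^3\gg 6\binom{|B|}{2}$ hexagons. The disjoint union of $K_{3,n-m-6}$ and $K_{3,m}$ satisfies your stability conclusion, so stability alone cannot exclude this.

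The comparison must therefore be global. Replace all of $C$ at once; this stays $C_8$-free because $e(G[B])\le 1$, shown below. Then prove that fewer than $6\binom{n-3}{3}-6\binom{|B|}{3}-O(n)\approx 3|C|n^2$ hexagons meet $C$. That estimate is missing, and it, not the final optimisation, is the heart of the exact result.

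\textbf{The parts you left open are easy.} Your candidate works: $K_3\vee(K_2\cup\overline{K_{n-5}})$, i.e.\ $K_{3,n-3}$ plus a triangle on $A$ and one edge $b_1b_2$ in $B$.
\begin{enumerate}
\item A cycle with $j$ vertices in $A$, $r$ runs and $\epsilon\in\{0,1\}$ uses of $b_1b_2$ has length $j+r+\epsilon\le 7$, so the graph is $C_8$-free.
\item Its hexagons have type $(3,3,0)$, giving $6\binom{n-3}{3}$, or type $(3,2,1)$, giving $3\cdot 4(n-5)$.
\item Each hexagon of type $(3,2,1)$ uses exactly one $A$-edge together with $b_1b_2$. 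So neither kind of extra edge creates hexagons on its own, and ``hexagons through each extra edge'' is the wrong bookkeeping.
\end{enumerate}
For the upper bound, two edges inside $B$, disjoint or forming a path, give a $C_8$ through $a_1,a_2,a_3$ as soon as $|B|\ge 6$. Hence $e(G[B])\le 1$. Once $V(G)=A\cup B$, this gives $N(C_6,G)\le 6\binom{n-3}{3}+4(n-5)\,e(G[A])\le 6\binom{n-3}{3}+12(n-5)$, with no case analysis needed.
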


In particular,
\[
\ex(n,C_6,C_8)=(1+o(1))n^3,
\]
so the leading constant agrees with the complete bipartite construction.
The leading constant for general $k>\ell\ge3$, however, remained open.

Our main result settles this problem for all shorter even cycles.

\begin{theorem}\label{thm:main}
For every pair of integers $k>\ell\ge2$,
\[
\ex(n,C_{2\ell},C_{2k})
=
\left(
\frac{(k-1)_\ell}{2\ell}+o(1)
\right)n^\ell.
\]
\end{theorem}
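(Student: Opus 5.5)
The lower bound comes from $K_{k-1,n-k+1}$, as computed above. It remains to show that every $n$-vertex $C_{2k}$-free graph $G$ satisfies
\[
N(C_{2\ell},G)\le\left(\frac{(k-1)_\ell}{2\ell}+o(1)\right)n^\ell .
\]

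The plan is a stability-type counting argument built around ``heavy'' vertex sets. Fix a small $\varepsilon>0$. Because $\ex(n,C_{2k})=O(n^{1+1/k})$, $G$ has $o(n^2)$ edges. Any $C_{2\ell}$ with $\ell$ vertices of large degree therefore has to use paths of length two through low-degree vertices.

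The first step is to classify copies of $C_{2\ell}$. Write $C=v_1u_1v_2u_2\cdots v_\ell u_\ell$. Since $e(G)=o(n^2)$, the number of $C_{2\ell}$ in which no ``alternating'' set of $\ell$ vertices has large pairwise common neighbourhoods is $o(n^\ell)$. To see this, count $C_{2\ell}$ by choosing alternate vertices $u_i$ and then the other vertices as common neighbours. This gives
\[
N(C_{2\ell},G)\le \frac{1}{2\ell}\sum_{(v_1,\dots,v_\ell)}\prod_i d(v_i,v_{i+1})
\]
over ordered $\ell$-tuples, where $d(x,y)$ is the codegree (up to lower-order terms from both alternate sets). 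The pairs $\{x,y\}$ with $d(x,y)\ge \varepsilon n$ form an auxiliary graph $A$. Since $\sum_{x,y}d(x,y)=\sum_v\binom{d(v)}2\le n\cdot o(n^2)$ is too weak alone, I would instead use $\sum_{x,y} d(x,y)^2$-type bounds from the $C_{2k}$-freeness (via GGMV's $\Theta(n^\ell)$ method). The goal is to show that cycle contributions with some codegree below $\varepsilon n$ total at most $O(\varepsilon)n^\ell$.

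The second step, which I expect to be the main obstacle, is a structural lemma. Let $A$ be the graph on $V(G)$ whose edges are pairs with codegree at least $\varepsilon n$, and suppose $A$ contains a path or cycle structure. If $A$ has a connected component containing $k$ vertices, one can greedily embed a $C_{2k}$ in $G$ by choosing distinct common neighbours along a spanning closed walk of that component. Only $O(k)$ vertices need to be avoided at each step, and each codegree is at least $\varepsilon n$. The subtlety is that closing a cycle needs a cyclic sequence of $k$ distinct vertices consecutive in $A$, not just connectivity. To get this, I would pass to the observation that each $C_{2\ell}$ counted with all codegrees large gives an $\ell$-cycle in $A$. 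From long paths in $A$ together with such cycles one can build a $C_{2k}$ in $G$ through repeated vertices, using distinct middle vertices, if a component of $A$ has at least $k$ vertices and contains a cycle. I would try to handle tree-like components separately by noting that they support no $C_\ell$ in $A$ and so contribute nothing. The conclusion would be that every component of $A$ carrying cycles has at most $k-1$ vertices.

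The final step is optimisation. For a component $S$ with $|S|=s\le k-1$, the cycles using $S$ as the alternating set number at most about $\frac{(s)_\ell}{2\ell}\prod d\le\frac{(s)_\ell}{2\ell}\,c(S)^\ell$, where $c(S)$ is the number of vertices adjacent to at least two vertices of $S$. If distinct components were to share $\Omega(n)$ common neighbours, then merging two components via a common neighbour vertex would produce longer alternating structures, and I would prove these are again excluded by the $C_{2k}$ argument. So $\sum_S c(S)\le(1+O(\varepsilon))n$. By convexity of $x^\ell$ together with $(s)_\ell\le(k-1)_\ell$, the total is at most
\[
\frac{(k-1)_\ell}{2\ell}\,n^\ell+O(\varepsilon)n^\ell .
\]
Letting $\varepsilon\to0$ gives the theorem.
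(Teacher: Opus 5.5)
There is a genuine gap in your second step. The claim that every component of $A$ carrying a cycle has at most $k-1$ vertices is false, and so is the idea behind it. A closed walk in $A$ with repeated vertices does not give a $C_{2k}$ in $G$, because the alternating vertices of a $C_{2k}$ must themselves be distinct.

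For a concrete counterexample take $k=4$, $\ell=3$. Let $G$ consist of $a,b,c,d$, a set $Y_1$ of about $n/2$ vertices joined to $a,b,c$, and a set $Y_2$ of about $n/2$ vertices joined to $c,d$. Then $A$ is the triangle $abc$ with the pendant edge $cd$, which is a component on $4=k$ vertices containing a $C_\ell$. Yet $G$ is $C_8$-free:
\begin{itemize}
\item $G$ is bipartite, so a $C_8$ would have to use all of $a,b,c,d$;
\item the two cycle-neighbours $y,y'$ of $d$ lie in $Y_2$ and have degree $2$;
\item hence the cycle contains $c\,y\,d\,y'\,c$, which is a $4$-cycle, a contradiction.
\end{itemize}
The failure is not a boundary effect:
\begin{itemize}
\item \emph{Windmill.} Take centre $c$ and blades $\{a_i,b_i\}$, with $Y_i$ joined to $a_i,b_i,c$. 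Then $A$ is connected on $2m+1$ vertices with many triangles, while $G$ stays $C_8$-free.
\item \emph{Large blocks.} For odd $k$, join $W_i$ to $x,z_i$ and $W_i'$ to $x',z_i$. Then $A$ contains the $2$-connected graph $K_{2,m}$, which is full of $C_4$'s, while $G$ remains $C_{2k}$-free. So passing to blocks does not rescue the claim.
\end{itemize}
Consequently your optimisation via $\frac{(s)_\ell}{2\ell}c(S)^\ell$ with $s\le k-1$ has no basis. The bound $\sum_S c(S)\le(1+O(\varepsilon))n$ is also unsupported. At threshold $\varepsilon n$, a vertex shared by two components only yields cross pairs whose codegree may be far below $\varepsilon n$, so the proposed merging argument has no contradiction to reach.

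The missing idea is a \emph{local} form of the bound $k-1$, obtained by counting through the other alternating side.
\begin{itemize}
\item For each vertex $x$, the set $R_x=N_G(x)\cap V(H_q)$ is a clique in $H_{2k}$ for all but $o(n)$ vertices $x$. Note the threshold $2k$, not $\varepsilon n$; $q=n/\log n$ is used only to shrink to a polylogarithmic vertex set.
\item Take any cyclic sequence of such cliques $Q_{a_1},\ldots,Q_{a_\ell}$ admitting a compatible $\ell$-cycle. The union $U$ of the consecutive intersections $Q_{a_{j-1}}\cap Q_{a_j}$ has at most $k-1$ vertices. Otherwise, extra vertices of $U$ can be inserted into the cycle to form a $C_k$ in $H_{2k}$, which gives a $C_{2k}$ in $G$.
\item Double counting pairs (clique sequence, vertex tuple) then gives the weighted bound $\frac{(k-1)_\ell}{2\ell}p^\ell$ with $p=n+o(n)$.
\end{itemize}
This replaces both your component bound and your $\sum_S c(S)$ bound.

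Your first step is salvageable but is not actually carried out. It needs three ingredients:
\begin{itemize}
\item the estimate $e(H_q)=O_k((n/q)^{k/(k-1)})$, which by dyadic summation gives $\sum_{2k\le f(u,v)\le p}f(u,v)^2=O_k(n^{k/(k-1)}p^{(k-2)/(k-1)})$;
\item the inequality $|\operatorname{tr}(DA^{\ell-1})|\le\|D\|_{\mathrm F}\|A\|_{\mathrm F}^{\ell-1}$;
\item a separate count showing that cycles in which both alternating sides contain a pair of codegree below $2k$ number $O(e(G)^{\ell-1})=o(n^\ell)$.
\end{itemize}
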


Thus $K_{k-1,n-k+1}$ is asymptotically extremal for every
$k>\ell\ge2$. For $k>\ell\ge3$,
Theorem~\ref{thm:main} closes the gap in
Theorem~\ref{thm:GGMV-intro} and determines the previously unknown
leading constant.

\noindent
\textbf{Notation.}
Throughout the paper, let $G=(V(G),E(G))$ be a graph. For
$v\in V(G)$, let $N_G(v)$ and $d_G(v)$ denote its neighborhood and
degree, respectively. For disjoint sets $A,B\subseteq V(G)$, let
$e_G(A,B)$ denote the number of edges with one endpoint in $A$ and
the other in $B$. When the ambient graph is clear, we omit the
subscript $G$.
For $S\subseteq V(G)$, let $G[S]$ denote the subgraph induced by $S$,
and write $e(G)=|E(G)|$. For a graph $J$, we write $J\subseteq G$ if
$V(J)\subseteq V(G)$ and $E(J)\subseteq E(G)$; the subgraph $J$ need
not be induced.
A path of length $r$ is a sequence
$v_0v_1\cdots v_r$ of distinct vertices such that
$v_iv_{i+1}\in E(G)$ for every $0\le i<r$. Thus a path on $r$
vertices has length $r-1$.

For distinct vertices $u,v\in V(G)$, let $f(u,v)=|N_G(u)\cap N_G(v)|$
denote their \emph{codegree}. For $q>0$, let $H_q$ be the graph whose
edges are precisely the pairs
\[
uv\in\binom{V(G)}{2}
\quad\text{such that}\quad
f(u,v)\ge q,
\]
with isolated vertices omitted.

\section{Key lemmas}

We first recall two classical results. The following theorem of
Bondy and Simonovits~\cite{BondySimonovits} gives the standard
extremal bound for even cycles.

\begin{theorem}[Bondy--Simonovits]\label{thm:BS}
For every integer $k\ge2$, every $n$-vertex $C_{2k}$-free graph has
$O_k(n^{1+1/k})$ edges.
\end{theorem}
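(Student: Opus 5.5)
We would prove the bound by the classical breadth-first-search layering argument of Bondy and Simonovits, in the streamlined form due to Verstra\"ete. The claim is that there is a constant $c=c_k$ such that every $C_{2k}$-free graph $G$ on $n$ vertices has $e(G)\le c\,n^{1+1/k}$. The plan is to argue by contradiction and derive a lower bound on the size of a BFS ball of radius $k$ that exceeds $n$.

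\textbf{Reductions.} Suppose $e(G)>c\,n^{1+1/k}$.
\begin{enumerate}
\item Take a bipartite subgraph containing at least half the edges.
\item Repeatedly delete vertices of degree less than $e/n$, where $e$ is the current number of edges. This standard peeling does not decrease the average degree, so it yields a nonempty bipartite $C_{2k}$-free subgraph $G'$ with minimum degree $\delta\ge d:=\tfrac c2 n^{1/k}$.
\item Fix a vertex $v$ of $G'$, and let $L_0=\{v\},L_1,\dots,L_k$ be its BFS distance layers. Since $G'$ is bipartite, each $L_i$ is an independent set, so all edges incident to $L_i$ go to $L_{i-1}\cup L_{i+1}$.
\end{enumerate}

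\textbf{Key expansion step.} For each $1\le i<k$ we want
\[
|L_{i+1}|\ge \frac{d}{Ck}\,|L_i|
\]
for an absolute constant $C$. Iterating gives $|L_k|\ge (d/Ck)^k$, and this exceeds $n$ once $c$ is large in terms of $k$, which is the desired contradiction.

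To get the expansion, consider the bipartite graph $H_i=G'[L_i\cup L_{i+1}]$. If $|L_{i+1}|$ were small compared with $\frac{d}{Ck}|L_i|$, then by the minimum-degree condition and induction on earlier layers, most edges at $L_i$ must go forward into $L_{i+1}$. Then $H_i$ would contain a subgraph $H'$ with average degree, and hence a subgraph with minimum degree, at least of order $k$.

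The central lemma is the following. In a bipartite graph of minimum degree at least $2k$ (or average degree at least $4k$), together with a suitable nontrivial partition $(A,B)$ of its vertex set, one finds $A$--$B$ paths of every odd length up to $2k-1$. This is the Bondy--Simonovits lemma on a cycle with a chord admitting paths of all lengths, or Verstra\"ete's lemma on $(A,B)$-paths.

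\textbf{Producing the forbidden cycle.} The partition $(A,B)$ comes from the BFS tree. Look at the last common ancestor $x\in L_j$ of the vertices of $H'$ in the tree, and split $V(H')$ according to which child subtree of $x$ contains each vertex's ancestor; choose the split so it is nontrivial. An $A$--$B$ path $P$ of length $p$ in $H'$ can then be closed up through the two tree paths back to $x$. These tree paths are internally disjoint from each other and from $P$, because they use only layers before $i$ and branch in different subtrees. This gives a cycle of length $p+2(i-j)$. Choosing $p=2k-2(i-j)$, which is allowed since $i-j\le i<k$ and the lengths in the lemma have the right parity, produces a $C_{2k}$, a contradiction.

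\textbf{Main obstacle.} We expect the main difficulty to be exactly this path-length lemma together with the choice of the partition $(A,B)$. One must guarantee simultaneously that both sides are nonempty in a way that forces an $A$--$B$ path of the precise required length, and that the tree paths closing the cycle stay disjoint from it. We would first try Verstra\"ete's formulation: take a longest path in $H'$, use the minimum-degree condition to obtain many chords from its endpoint, and then run a theta-graph argument showing that all required odd lengths of $A$--$B$ paths occur.
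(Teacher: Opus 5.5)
The paper gives no proof of this statement; it quotes Bondy--Simonovits and uses the result as a black box, so your outline has to stand on its own. Your reductions (bipartite subgraph, peeling to minimum degree $d=\frac c2 n^{1/k}$) and the layer-growth bookkeeping are the standard ones and are fine. The step where you close up the cycle, however, is wrong as written. Since $H'\subseteq G[L_i\cup L_{i+1}]$ is bipartite with sides in $L_i$ and $L_{i+1}$, an odd $A$--$B$ path $P$ has one end $a\in L_i$ and the other end $b\in L_{i+1}$. Two things then fail.

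First, the lengths. The closed walk has length $p+(i-j)+(i+1-j)=p+2(i-j)+1$, not $p+2(i-j)$. Your choice $p=2k-2(i-j)$ is even, so it is not among the odd lengths your lemma supplies; the claim that ``the lengths in the lemma have the right parity'' is false. Second, disjointness. The tree path from $b$ begins with its parent $\pi(b)\in L_i$, which can lie on $P$ (e.g.\ if $P$ ends with the tree edge $\pi(b)b$). So the closing paths do not use only layers below $i$, and you need not obtain a cycle at all.

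The usual repair is to split only $X=V(H')\cap L_i$, by the branches at the deepest common ancestor $x\in L_j$ of $X$. You then use $A$--$B$ paths with \emph{both} ends in $L_i$ and even length $2k-2(i-j)\in\{2,4,\dots,2k-2\}$. With this choice the two tree paths lie in $L_j\cup\dots\cup L_{i-1}$, meet only at $x$, and avoid $P$. But you then need an even-length version of the path lemma for a split of one colour class, and proving it is exactly what your proposal leaves open; a nonempty split is not enough.

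If $H'$ is disconnected across the split, there are no $A$--$B$ paths at all, so you must at least pass to a component before choosing $x$. Even for connected $H'$, the theta graph produced by a longest path and the chords at its endpoint is $2$-connected, hence lies in a single block of $H'$. Suppose $H'$ consists of two dense pieces in different branches of $x$ joined by a bridge. Then every such theta graph misses $A$ or $B$, and the Bondy--Simonovits theta lemma has nothing to work with, even though the required paths do exist through the bridge.

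Guaranteeing a theta graph with at least $2k$ vertices that meets both sides, or otherwise producing all even $A$--$B$ lengths up to $2k-2$, is the genuinely nontrivial step of the Bondy--Simonovits/Verstra\"ete argument. It needs a real argument and a careful choice of $H'$ and of the split. Until that is supplied, your proof is incomplete.
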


We also use the following theorem of Erd\H{o}s and
Gallai~\cite{ErdosGallai}.

\begin{theorem}[Erd\H{o}s--Gallai]\label{thm:EG-path}
For every integer $r\ge2$, every $n$-vertex graph containing no path
on $r$ vertices has at most $\frac{r-2}{2}n$ edges.
\end{theorem}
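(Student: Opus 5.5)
We prove the Erdős–Gallai bound by induction on $n$, with an auxiliary strong induction on $r$. Let $G$ be an $n$-vertex graph with no path on $r$ vertices. For $n\le r-1$ there is nothing subtle: $e(G)\le\binom{n}{2}=\frac{n-1}{2}n\le\frac{r-2}{2}n$. Since the bound is additive over components, we may also assume that $G$ is connected; otherwise we apply induction to each component. So assume $G$ is connected and $n\ge r$.

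\textbf{Low-degree vertex.} If some vertex $v$ has $d(v)\le\frac{r-2}{2}$, delete it. The graph $G-v$ still has no path on $r$ vertices, so by induction
\[
e(G)\le \frac{r-2}{2}(n-1)+\frac{r-2}{2}=\frac{r-2}{2}n.
\]

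\textbf{Main case.} Otherwise every vertex has degree at least $\frac{r-1}{2}$. Take a longest path $P=v_0v_1\cdots v_m$, so $m+1\le r-1$. By maximality, every neighbour of $v_0$ and of $v_m$ lies on $P$.

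We then run the standard Pósa/Dirac rotation argument. Suppose there is no $i$ with $v_0v_{i+1}\in E(G)$ and $v_iv_m\in E(G)$. Then the sets $\{i: v_0v_{i+1}\in E\}$ and $\{i: v_iv_m\in E\}$ are disjoint subsets of $\{0,\dots,m-1\}$. This gives
\[
d(v_0)+d(v_m)\le m\le r-2,
\]
which contradicts $d(v_0)+d(v_m)\ge r-1$. Hence such an $i$ exists, and $v_0\cdots v_i v_m v_{m-1}\cdots v_{i+1}v_0$ is a cycle $C$ on the $m+1$ vertices of $P$.

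Since $G$ is connected and $n\ge r>m+1$, some vertex outside $C$ is adjacent to a vertex of $C$. Opening $C$ at that vertex yields a path on $m+2$ vertices, contradicting the maximality of $P$. So this case cannot occur, and the induction is complete.

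\textbf{Where care is needed.} The expected obstacle is the main case: one has to check that the degree threshold $\frac{r-1}{2}$ produced by the low-degree step is exactly what the rotation counting needs. The count uses $m\le r-2$, which gives $d(v_0)+d(v_m)\le r-2$ if no crossing pair exists, while the degree hypothesis gives $d(v_0)+d(v_m)\ge r-1$. The other point to check is that the counting is valid when $r$ is odd. Since degrees are integers, $d(v)>\frac{r-2}{2}$ gives $d(v)\ge\frac{r-1}{2}$ exactly as used, so parity causes no trouble.
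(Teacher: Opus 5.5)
The paper gives no proof of this statement: it quotes the classical Erd\H{o}s--Gallai theorem with a citation and uses it only as a black box in the proof of Lemma~\ref{lem:high-codegree}(ii). Your argument is a correct, self-contained proof. It is the standard minimum-degree induction: reduce to a connected graph with $n\ge r$, delete a vertex of degree at most $\frac{r-2}{2}$, and otherwise apply the Dirac--P\'osa crossing argument to a longest path. That last step shows that a connected graph with $n\ge r$ and minimum degree at least $\frac{r-1}{2}$ contains a path on $r$ vertices. Your count $d(v_0)+d(v_m)\le m\le r-2$ and your integrality remark for odd $r$ are both right. A common alternative deduces the path bound from the Erd\H{o}s--Gallai circumference theorem by adding a vertex adjacent to all others; your route avoids any cycle theorem.

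Two cosmetic points. First, the ``auxiliary strong induction on $r$'' is never used; the induction is on $n$ with $r$ fixed, so you can drop it. Second, when $m=1$ your object $C$ is just the edge $v_0v_1$, not a cycle. The extension step still works in that case, but it is cleaner to phrase the conclusion as follows: every vertex of $P$ is an endpoint of some path with vertex set $V(P)$, so an outside neighbour of any vertex of $P$ yields a path on $m+2$ vertices.
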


We now establish several properties of the auxiliary graph $H_q$.

\begin{lemma}\label{lem:high-codegree}
Let $k\ge3$, and let $G$ be an $n$-vertex $C_{2k}$-free graph.
For every real $q\ge2k$, the following statements hold.

\begin{enumerate}[label=(\roman*)]
\item
$H_q$ is $C_k$-free.

\item
If $J$ is a subgraph of $H_q$, then
\[
\sum_{uv\in E(J)} f(u,v)
\le
\frac{k-2}{2}\sum_{v\in V(J)} d_G(v)
=
O_k\!\left(n|V(J)|^{1/k}\right).
\]

\item
\[
e(H_q)
=
O_k\!\left(
\left(\frac{n}{q}\right)^{k/(k-1)}
\right).
\]

\item
For every real $p$ with $2k\le p\le n$,
\[
\sum_{\substack{uv\in E(H_{2k})\\ f(u,v)\le p}}
f(u,v)^2
=
O_k\!\left(
n^{k/(k-1)}p^{(k-2)/(k-1)}
\right).
\]
In particular,
\[
\sum_{uv\in E(H_{2k})} f(u,v)^2
=
O_k(n^2).
\]
\end{enumerate}
\end{lemma}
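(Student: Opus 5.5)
The four parts will be proved in order, each from the previous ones. Parts (i) and (ii) rest on one observation: for $q\ge 2k$, an edge of $H_q$ behaves like a path of length~$2$ in $G$, and we can choose its middle vertex from at least $2k$ candidates. This lets us lift short structures in $H_q$ to a $C_{2k}$ in $G$ by greedy choices.

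For (i), suppose $v_1v_2\cdots v_kv_1$ is a $C_k$ in $H_q$. We choose, one pair at a time, a common neighbour $w_i$ of $v_i,v_{i+1}$ (indices mod $k$). Each $w_i$ must avoid the $k$ cycle vertices and the at most $k-1$ earlier $w_j$, so at most $2k-1<q$ vertices are forbidden. Then $v_1w_1v_2w_2\cdots v_kw_kv_1$ is a $C_{2k}$ in $G$, a contradiction.

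For (ii), we double count:
\[
\sum_{uv\in E(J)}f(u,v)=\sum_{w\in V(G)} e\bigl(J[N_G(w)\cap V(J)]\bigr).
\]
We claim that $J[N_G(w)]$ contains no path on $k$ vertices. Suppose $u_1\cdots u_k$ were such a path. Choose distinct common neighbours $w_i$ of $u_i,u_{i+1}$ for $1\le i\le k-1$, avoiding $w$, the $u_j$, and the earlier $w_j$. At most $1+k+(k-2)=2k-1$ vertices are forbidden, so this is possible, and $wu_1w_1u_2\cdots w_{k-1}u_kw$ is a $C_{2k}$. Hence Theorem~\ref{thm:EG-path} bounds the $w$-term by $\frac{k-2}{2}|N_G(w)\cap V(J)|$. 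Summing over $w$ gives $\frac{k-2}{2}\sum_{v\in V(J)}d_G(v)$.

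It remains to show that $\sum_{v\in S}d_G(v)=O_k(n|S|^{1/k})$ for every $S\subseteq V(G)$ with $|S|=m$. This is the step I expect to be the main obstacle, because Theorem~\ref{thm:BS} alone only gives $O(n^{1+1/k})$. The edges inside $S$ contribute $O(m^{1+1/k})\le O(nm^{1/k})$. For the edges from $S$ to $X=V(G)\setminus S$, vertices $x\in X$ with $d_S(x)<2k$ contribute at most $2kn$ in total. Let $X'$ be the set of vertices $x\in X$ with $d_S(x)\ge 2k$. I would bound the edges of the bipartite $C_{2k}$-free graph $B=G[S,X']$ by an unbalanced Zarankiewicz-type bound, aiming for $e(B)=O_k(nm^{1/k}+n)$. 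To get it, I would first try projecting $B$ onto $S$: each $x\in X'$ contributes a matching of about $d_S(x)/2$ pairs inside $N_S(x)$, a path of length $2$ in $B$ through $x$ serves as an edge of the projection, and one applies a Bondy--Simonovits-type count in the resulting multigraph on $m$ vertices. If that fails, I would quote a known bound for $C_{2k}$-free bipartite graphs with parts of sizes $m\le n$.

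For (iii), apply (ii) with $J=H_q$. Every edge has $f\ge q$ and $|V(H_q)|\le 2e(H_q)$, so
\[
q\,e(H_q)\le O_k\bigl(n\,e(H_q)^{1/k}\bigr),
\]
which gives $e(H_q)^{(k-1)/k}=O_k(n/q)$, i.e.\ $e(H_q)=O_k((n/q)^{k/(k-1)})$.

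For (iv), split the edges of $H_{2k}$ dyadically by $f(u,v)\in[2^j,2^{j+1})$, for $2^j$ ranging from about $2k$ up to $p$. Each class lies in $H_{2^j}$, so by (iii) it contributes at most
\[
4^{j+1}e(H_{2^j})=O_k\bigl(n^{k/(k-1)}2^{j(k-2)/(k-1)}\bigr).
\]
For $k\ge3$ the exponent $(k-2)/(k-1)$ is positive, so the sum is geometric and dominated by its last term, which is $O_k(n^{k/(k-1)}p^{(k-2)/(k-1)})$. Taking $p=n$, and noting that $f\le n$ always, gives the bound $n^{(2k-2)/(k-1)}=n^2$.
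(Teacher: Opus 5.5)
Your parts (i), (iii), (iv) and the Erd\H{o}s--Gallai double count in (ii) follow the paper's argument. The genuine gap is the estimate $\sum_{v\in S}d_G(v)=O_k(n|S|^{1/k})$. You flag it as the main obstacle but leave it unproved, and (iii) and (iv) both depend on it.

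Your primary route does not work as described. After replacing each $N_S(x)$ by a matching, the projected multigraph on $S$ only records that it has no $k$-cycle whose edges come from distinct vertices $x$. For odd $k$ this allows the underlying simple graph to be complete bipartite on $S$, and multiplicities are unbounded. So no Bondy--Simonovits-type count on $m$ vertices can produce $O_k(nm^{1/k})$.

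Your fallback is sound but left unspecified. An unbalanced Zarankiewicz bound of the form $O_k\bigl((ab)^{(k+1)/(2k)}+a+b\bigr)$ for $C_{2k}$-free bipartite graphs with parts of sizes $a,b$ (Naor--Verstra\"ete) does suffice, since $(mn)^{(k+1)/(2k)}\le nm^{1/k}$ when $m\le n$. You would still have to cite it and check the exponents, and it is heavier than necessary.

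The idea you are missing is an elementary partition trick that uses only Theorem~\ref{thm:BS}. If $m=|S|\ge n/2$, then $\sum_{v\in S}d_G(v)\le 2e(G)=O_k(n^{1+1/k})=O_k(nm^{1/k})$. Otherwise, split $V(G)\setminus S$ into $r=\lceil (n-m)/m\rceil\ge 2$ parts $R_1,\dots,R_r$, each of size at most $m$.
\begin{itemize}
\item Each $G[S\cup R_i]$ is $C_{2k}$-free on at most $2m$ vertices, so it has $O_k(m^{1+1/k})$ edges.
\item In $\sum_i e(G[S\cup R_i])$, every edge inside $S$ is counted $r\ge 2$ times and every edge leaving $S$ exactly once.
\item Hence $\sum_{v\in S}d_G(v)\le\sum_i e(G[S\cup R_i])=O_k(rm^{1+1/k})=O_k(nm^{1/k})$, using $r\le n/m$.
\end{itemize}
This needs no bipartite machinery and no separate treatment of the vertices with $d_S(x)<2k$.

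There is also a minor slip in (iv). Part (iii) only applies for $q\ge 2k$, so a dyadic class $[2^j,2^{j+1})$ with $2^j<2k$ must be bounded via $e(H_{2k})$ instead. The paper avoids this by using the scales $2^{j+1}k$.
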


\begin{proof}

\noindent\textit{(i)} Suppose that $H_q$ contains a cycle
$v_1v_2\cdots v_kv_1$, and set $v_{k+1}=v_1$.
For each $1\le i\le k$, choose
$
z_i\in N_G(v_i)\cap N_G(v_{i+1})
$
successively, avoiding $v_1,\ldots,v_k$ and the previously chosen
vertices $z_1,\ldots,z_{i-1}$. At the $i$th step, at most $k+i-1\le 2k-1<q$
vertices are forbidden. Since $v_iv_{i+1}\in E(H_q)$, we have
$f(v_i,v_{i+1})\ge q$, so such a choice is always possible.
Thus all vertices
$
v_1,\ldots,v_k,z_1,\ldots,z_k
$
are distinct, and
$
v_1z_1v_2z_2\cdots v_kz_kv_1
$
is a $C_{2k}$ in $G$, a contradiction. Hence $H_q$ is $C_k$-free.

\medskip
\noindent\textit{(ii)}
Fix $x\in V(G)$. We first show that
$J[N_G(x)\cap V(J)]$ contains no path on $k$ vertices. Suppose that
$z_1z_2\cdots z_k$ is such a path. Since
$z_iz_{i+1}\in E(J)\subseteq E(H_q)$, we have
$f(z_i,z_{i+1})\ge q$ for every $1\le i\le k-1$.

Choose
$y_i\in N_G(z_i)\cap N_G(z_{i+1})$ successively for
$1\le i\le k-1$, avoiding $x$, the vertices $z_1,\ldots,z_k$, and
the previously chosen vertices $y_1,\ldots,y_{i-1}$. At the $i$th
step, at most $k+i\le2k-1<q$ vertices are forbidden, so such a choice
is always possible. Since $z_1,\ldots,z_k\in N_G(x)$, the vertices in
$xz_1y_1z_2y_2\cdots z_{k-1}y_{k-1}z_kx$ are distinct and form a
$C_{2k}$ in $G$, a contradiction. Hence
$J[N_G(x)\cap V(J)]$ contains no path on $k$ vertices.

By Theorem~\ref{thm:EG-path},
\[
e\bigl(J[N_G(x)\cap V(J)]\bigr)
\le
\frac{k-2}{2}|N_G(x)\cap V(J)|.
\]
Summing over $x\in V(G)$ gives
\begin{align*}
\sum_{uv\in E(J)}f(u,v)
&=
\sum_{x\in V(G)}
e\bigl(J[N_G(x)\cap V(J)]\bigr)\\
&\le
\frac{k-2}{2}
\sum_{x\in V(G)}|N_G(x)\cap V(J)|
=
\frac{k-2}{2}
\sum_{v\in V(J)}d_G(v).
\end{align*}

It remains to show that
$\sum_{v\in V(J)}d_G(v)
=O_k\!\left(n|V(J)|^{1/k}\right)$.
This is immediate if $V(J)=\varnothing$. If $|V(J)|\ge n/2$, then
Theorem~\ref{thm:BS} gives
\[
\sum_{v\in V(J)}d_G(v)
\le
2e(G)
=
O_k(n^{1+1/k})
=
O_k\!\left(n|V(J)|^{1/k}\right).
\]

Now suppose that $1\le |V(J)|<n/2$, and set
$r=\left\lceil (n-|V(J)|)/|V(J)|\right\rceil$.
Partition $V(G)\setminus V(J)$ into
$R_1\cup\cdots\cup R_r$ so that the sizes of any two parts differ by
at most one. Then $|R_i|\le |V(J)|$ for every $i\in \{1,\cdots ,r\}$, and hence
$G[V(J)\cup R_i]$ has at most $2|V(J)|$ vertices. Since it is
$C_{2k}$-free, Theorem~\ref{thm:BS} gives
\[
e\bigl(G[V(J)\cup R_i]\bigr)
=
O_k\!\left(|V(J)|^{1+1/k}\right)
\qquad (1\le i\le r).
\]

Since $r\ge2$, every edge of $G[V(J)]$ is counted $r$ times in
$\sum_{i=1}^r e(G[V(J)\cup R_i])$, while every edge between
$V(J)$ and $V(G)\setminus V(J)$ is counted once. Therefore
\begin{align*}
\sum_{v\in V(J)}d_G(v)
&=
2e(G[V(J)])
+
e(V(J),V(G)\setminus V(J))\\
&\le
\sum_{i=1}^r e\bigl(G[V(J)\cup R_i]\bigr)=
O_k\!\left(r|V(J)|^{1+1/k}\right)
=
O_k\!\left(n|V(J)|^{1/k}\right).
\end{align*}
This proves~(ii).

\medskip
\noindent\textit{(iii)}
If $e(H_q)=0$, there is nothing to prove. Otherwise, since $H_q$
has no isolated vertices, we have
$|V(H_q)|\le 2e(H_q)$. By the definition of $H_q$ and part~(ii),
\[
q e(H_q)
\le
\sum_{uv\in E(H_q)} f(u,v)
=
O_k\!\left(n|V(H_q)|^{1/k}\right)
=
O_k\!\left(ne(H_q)^{1/k}\right).
\]
It follows that
\[
e(H_q)
=
O_k\!\left(
\left(\frac{n}{q}\right)^{k/(k-1)}
\right).
\]

\medskip
\noindent\textit{(iv)}
For each edge \(uv\in E(H_{2k})\) with \(f(u,v)\le p\), there is a unique integer \(j\ge0\) such that
$
2^{j+1}k\le f(u,v)<2^{j+2}k.
$
Necessarily \(2^{j+1}k\le p\). For a fixed \(j\), all edges in this range belong to \(H_{2^{j+1}k}\), so their number is at most \(e(H_{2^{j+1}k})\). Moreover, for each such edge,
$
f(u,v)^2<4(2^{j+1}k)^2.
$ Hence, by part~(iii),
\begin{align*}
\sum_{\substack{uv\in E(H_{2k})\\ f(u,v)\le p}} f(u,v)^2
&\le
4\sum_{\substack{j\ge0\\2^{j+1}k\le p}}
(2^{j+1}k)^2 e\bigl(H_{2^{j+1}k}\bigr)\\
&=
O_k\!\left(
n^{k/(k-1)}
\sum_{\substack{j\ge0\\2^{j+1}k\le p}}
(2^{j+1}k)^{(k-2)/(k-1)}
\right)\\
&=
O_k\!\left(
n^{k/(k-1)}p^{(k-2)/(k-1)}
\right),
\end{align*}
where the last step follows from the geometric sum.

Finally, $f(u,v)\le n$ for all distinct $u,v\in V(G)$. Taking $p=n$
therefore gives
\[
\sum_{uv\in E(H_{2k})} f(u,v)^2
=
O_k(n^2).
\]
\end{proof}

\begin{lemma}\label{lem:cliques}
Let $k>\ell\ge3$, let $F$ be a $C_k$-free graph, and let
$Q_1,\ldots,Q_p$ be cliques of $F$, not necessarily distinct; empty
cliques are allowed. For each $uv\in E(F)$, define
\[
c(u,v)
=
\bigl|\{a\in\{1,\ldots,p\}:\{u,v\}\subseteq Q_a\}\bigr|.
\]
Then
\[
\sum_{C_\ell\subseteq F}
\prod_{uv\in E(C_\ell)}c(u,v)
\le
\frac{(k-1)_\ell}{2\ell}p^\ell.
\]
\end{lemma}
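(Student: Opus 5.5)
The idea is to expand the product $\prod_{uv\in E(C_\ell)}c(u,v)$ as a count of clique-labelings of the edges, and then to bound, for each fixed labeling, the number of compatible cycles by $(k-1)_\ell$. That bound comes from the $C_k$-freeness of $F$ through a cycle-extension argument inside a union of cliques.

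\emph{Step 1 (double counting).} For an unlabeled copy $C$ of $C_\ell$ in $F$, the product $\prod_{uv\in E(C)}c(u,v)$ equals the number of maps assigning to each edge $uv$ of $C$ an index $a\in\{1,\ldots,p\}$ with $\{u,v\}\subseteq Q_a$. Each unlabeled $C_\ell$ corresponds to exactly $2\ell$ cyclic vertex sequences $(v_1,\ldots,v_\ell)$ of distinct vertices, with $v_{\ell+1}=v_1$. Hence the left-hand side equals
\[
\frac{1}{2\ell}\sum_{(a_1,\ldots,a_\ell)\in\{1,\ldots,p\}^\ell} N(a_1,\ldots,a_\ell),
\]
where $N(a_1,\ldots,a_\ell)$ is the number of sequences $(v_1,\ldots,v_\ell)$ of distinct vertices with $\{v_i,v_{i+1}\}\subseteq Q_{a_i}$ for all $i$. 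Such a sequence is automatically a cycle in $F$, because each $Q_{a_i}$ is a clique and $\ell\ge3$. It therefore suffices to show $N(a_1,\ldots,a_\ell)\le (k-1)_\ell$ for every tuple.

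\emph{Step 2 (the key bound).} Fix a tuple $(a_1,\ldots,a_\ell)$ with $N(a_1,\ldots,a_\ell)>0$, and let $U=Q_{a_1}\cup\cdots\cup Q_{a_\ell}$. Since every counted sequence consists of $\ell$ distinct vertices of $U$, we have $N(a_1,\ldots,a_\ell)\le(|U|)_\ell$. So it remains to prove $|U|\le k-1$, and this is the main point.

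Take one valid cycle $v_1v_2\cdots v_\ell v_1$. Every vertex $w\in U\setminus\{v_1,\ldots,v_\ell\}$ lies in some $Q_{a_i}$, and we insert it between $v_i$ and $v_{i+1}$. Since $Q_{a_i}$ is a clique containing $v_i$, $v_{i+1}$ and all vertices inserted into that segment, the segment $v_i, w_1,\ldots,w_s, v_{i+1}$ is a path in $F$. Adding the vertices of $U\setminus\{v_1,\ldots,v_\ell\}$ one at a time, with each assigned to a single chosen clique, produces cycles of every length from $\ell$ up to $|U|$. If $|U|\ge k$, then because $k>\ell$ this yields a $C_k$ in $F$, a contradiction. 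Thus $|U|\le k-1$ and $N(a_1,\ldots,a_\ell)\le (k-1)_\ell$.

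\emph{Step 3 (conclusion).} Summing the bound from Step 2 over the $p^\ell$ tuples and dividing by $2\ell$ gives the claimed inequality. Tuples involving empty cliques, or with $N=0$, contribute nothing. The only delicate points are the bookkeeping of the factor $2\ell$ in Step 1, and checking in Step 2 that insertions into the same segment keep the path property; the latter holds because all inserted vertices and both endpoints lie in one clique. I expect Step 2 to be the crux, but the insertion argument appears to handle it directly.
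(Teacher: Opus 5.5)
Your proof is correct and follows essentially the same route as the paper: you double count pairs consisting of an index tuple $(a_1,\ldots,a_\ell)$ and an ordered cycle, then bound the number of ordered cycles per tuple by $(k-1)_\ell$ using the clique-insertion argument against $C_k$-freeness. The only difference is that you take $U=Q_{a_1}\cup\cdots\cup Q_{a_\ell}$ instead of the paper's $\bigcup_j(Q_{a_{j-1}}\cap Q_{a_j})$; this is legitimate because inserting $w$ between $v_i$ and $v_{i+1}$ only requires $w\in Q_{a_i}$, and it even gives the slightly stronger intermediate bound $|Q_{a_1}\cup\cdots\cup Q_{a_\ell}|\le k-1$.
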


\begin{proof}
Fix $(a_1,\ldots,a_\ell)\in\{1,\ldots,p\}^\ell$, with repetitions
allowed. An ordered $\ell$-tuple of distinct vertices
$(v_1,\ldots,v_\ell)$ is called \emph{compatible} with
$(a_1,\ldots,a_\ell)$ if
$\{v_j,v_{j+1}\}\subseteq Q_{a_j}$ for every $1\le j\le\ell$, where
$v_{\ell+1}=v_1$. Since each $Q_{a_j}$ is a clique, we have
$v_jv_{j+1}\in E(F)$ for every $1\le j\le\ell$. Hence
$v_1v_2\cdots v_\ell v_1$ is a copy of $C_\ell$ in $F$.

Set $a_0=a_\ell$ and
\[
U=\bigcup_{j=1}^{\ell}
\bigl(Q_{a_{j-1}}\cap Q_{a_j}\bigr).
\]
Every compatible tuple satisfies
$v_j\in Q_{a_{j-1}}\cap Q_{a_j}$ for $1\le j\le\ell$, and hence
$\{v_1,\ldots,v_\ell\}\subseteq U$.

We claim that $|U|\le k-1$ whenever a compatible tuple exists.
Suppose instead that $|U|\ge k$, and fix a compatible tuple
$(v_1,\ldots,v_\ell)$. Choose
$W\subseteq U\setminus\{v_1,\ldots,v_\ell\}$ with $|W|=k-\ell$.
Since \(W\subseteq U\), for every \(w\in W\) there exists an index
\(j\in\{1,\ldots,\ell\}\) such that
$
w\in Q_{a_{j-1}}\cap Q_{a_j}.
$
Choose one such index for each \(w\in W\), and insert \(w\) between
\(v_j\) and \(v_{j+1}\), grouping the vertices assigned to the same
index in any order. Since all vertices inserted between \(v_j\) and
\(v_{j+1}\), together with \(v_j\) and \(v_{j+1}\), belong to
\(Q_{a_j}\), all consecutive vertices in the resulting cyclic sequence
are adjacent. Hence the resulting sequence forms a \(C_k\) in \(F\),
a contradiction.

Thus, for each fixed $(a_1,\ldots,a_\ell)$, there are at most
$(|U|)_\ell\le(k-1)_\ell$ compatible tuples. Since there are
$p^\ell$ choices for $(a_1,\ldots,a_\ell)$, the total number of
compatible pairs
$$
\bigl((a_1,\ldots,a_\ell),(v_1,\ldots,v_\ell)\bigr)
$$
is at most $p^\ell(k-1)_\ell$.

On the other hand, fix a copy $C_\ell\subseteq F$, choose a starting
vertex and a direction around the cycle, and write its vertices in
order as $v_1,\ldots,v_\ell$, with $v_{\ell+1}=v_1$. For each
$1\le j\le\ell$, there are exactly $c(v_j,v_{j+1})$ choices for
$a_j$, independently of the other choices. Hence the number of
compatible sequences $(a_1,\ldots,a_\ell)$ corresponding to this
choice of starting vertex and direction is
$
\prod_{j=1}^{\ell}c(v_j,v_{j+1}).
$

Each copy of $C_\ell$ has $\ell$ choices for the starting vertex and
two choices for the direction. Therefore the total number of compatible
pairs is
\[
2\ell
\sum_{C_\ell\subseteq F}
\prod_{uv\in E(C_\ell)}c(u,v).
\]
Comparing this with the upper bound $p^\ell(k-1)_\ell$ obtained above
gives
\[
2\ell
\sum_{C_\ell\subseteq F}
\prod_{uv\in E(C_\ell)}c(u,v)
\le
p^\ell(k-1)_\ell,
\]
and the result follows.
\end{proof}

\section{Proof of the main theorem}
Let $G$ be an $n$-vertex $C_{2k}$-free graph. For distinct vertices
$u,v\in V(G)$, call the pair $uv$ \emph{fat} if $f(u,v)\ge 2k,$
and \emph{non-fat} otherwise.
Write a $2\ell$-cycle as
$v_0w_0v_1w_1\cdots v_{\ell-1}w_{\ell-1}v_0$, and set
$v_\ell=v_0$ and $w_\ell=w_0$. We call
$v_0v_1\cdots v_{\ell-1}v_0$ and
$w_0w_1\cdots w_{\ell-1}w_0$ its two \emph{alternating sides}.
These alternating sides need not be cycles in $G$. More generally,
an alternating side
$x_0x_1\cdots x_{\ell-1}x_0$ is called \emph{fat} if
$f(x_i,x_{i+1})\ge 2k$ for every $0\le i\le\ell-1,$
where $x_\ell=x_0$.

\begin{lemma}\label{lem:nonfat}
The number of $2\ell$-cycles for which each alternating side contains
a non-fat pair is $o(n^\ell)$.
\end{lemma}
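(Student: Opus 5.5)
The plan is to encode each $2\ell$-cycle $v_0w_0v_1w_1\cdots v_{\ell-1}w_{\ell-1}v_0$ through its two alternating sides. Let $M$ be the codegree matrix, with $M_{uv}=f(u,v)$ for $u\ne v$ and $M_{uu}=0$. Then the number of $2\ell$-cycles whose $v$-side is a prescribed cyclic sequence $(x_0,\dots,x_{\ell-1})$ is at most $\prod_i f(x_i,x_{i+1})$. Split $M=A+B$, where $A$ keeps the fat entries ($f\ge 2k$) and $B$ keeps the non-fat entries ($1\le f<2k$). Lemma~\ref{lem:high-codegree}(iv) gives $\|A\|_F^2=O_k(n^2)$. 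Since $\ell\ge2$, we get $\operatorname{tr}(A^\ell)\le\sum|\lambda_i(A)|^\ell\le\|A\|_F^\ell=O(n^\ell)$. This already recovers the order of magnitude $n^\ell$, so the task is to save a factor $o(1)$ whenever a $B$-factor occurs on \emph{both} sides.

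Step 1 (rotation). Rotate and reflect the cycle, losing only a constant factor. Then we may assume the $v$-side has its non-fat pair at $(v_0,v_1)$, and the $w$-side has a non-fat pair $(w_j,w_{j+1})$ for some $0\le j\le\ell-1$. Non-fatness gives at most $2k$ choices for $w_0$ once $v_0,v_1$ are fixed, and at most $2k$ choices for $v_{j+1}$ once $w_j,w_{j+1}$ are fixed. Deleting $w_0$ and $v_{j+1}$ therefore costs only a factor $4k^2$. What remains is a pair of vertex-disjoint paths whose endpoint pairs $(v_0,v_1)$ and $(w_j,w_{j+1})$ each have codegree in $[1,2k)$. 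The case $j=0$ is degenerate ($w_0$ is adjacent to both deleted vertices), and I would handle it in the same way.

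Step 2 (counting the remaining structure). I would group the remaining vertices back into alternating pairs $(v_i,v_{i+1})$ and $(w_i,w_{i+1})$ whose codegrees weight the count. Each remaining consecutive pair is either fat, and then controlled via $A$ and Lemma~\ref{lem:high-codegree}(iv), or non-fat, with codegree $<2k$. The aim is to bound the total count by a mixed product of the form $\operatorname{tr}(X_1\cdots X_\ell)$ with at least one $X_i=B$, further restricted by the $w$-side condition. Mixed traces are bounded by Hölder for Schatten norms: $|\operatorname{tr}(X_1\cdots X_\ell)|\le\prod\|X_i\|_{S_\ell}$. The intended saving comes from the threshold $p$ in Lemma~\ref{lem:high-codegree}(iv). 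Codegrees up to $p$ contribute only $O(n^{k/(k-1)}p^{(k-2)/(k-1)})=o(n^2)$ to $\|A\|_F^2$ for $p=o(n)$. Hence I would also split $A$ at a level $p=\varepsilon n$ into a "medium" part and a "huge" part. The huge part has $e(H_{\varepsilon n})=O_{k,\varepsilon}(1)$ edges by Lemma~\ref{lem:high-codegree}(iii), so it lives on a bounded vertex set $S$. I would then show that any cycle with non-fat pairs on both sides must either use a medium or non-fat factor in a place that yields a factor $o(1)$ in the Hölder bound, or pass through $S$ in a way that forces some vertex of large degree to be adjacent to both a non-fat $v$-pair and a non-fat $w$-pair.

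Main obstacle. The non-fat matrix $B$ is not small in any global norm. For example, in a star all leaf pairs have codegree $1$, so $\|B\|_F^2$ can be of order $n^2$. Thus the single non-fat factor on one side cannot be the source of the saving. The saving must come from combining both conditions. A non-fat pair on each side means that the deleted vertices $w_0$ and $v_{j+1}$ have few alternatives, and the remaining structure is then a union of two paths whose endpoints are only "weakly linked". To make this rigorous, I would first try Cauchy--Schwarz between the $v$-side and $w$-side counts: the number of such cycles is at most $\sqrt{N_vN_w}$, where $N_v$ counts cycles weighted by $v$-sides having a $B$-factor and $N_w$ likewise for $w$-sides. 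I would then try to show that $\min(N_v,N_w)$ is $o(n^\ell)$ using the bounded set $S$ from the level-$\varepsilon n$ split. I expect that this interaction step, rather than the fat-pair estimates, is where the real work lies.
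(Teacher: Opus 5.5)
Your proposal has a genuine gap. Step~2 and the ``main obstacle'' paragraph are a research plan rather than an argument, and the concrete route you suggest fails. If $N_v$ and $N_w$ are the one-sided counts, then $\min(N_v,N_w)=o(n^\ell)$ is false. In $K_{k-1,n-k+1}$, every labelled $2\ell$-cycle whose $v$-vertices lie in the large part has a $v$-side made of pairs of codegree $k-1<2k$, so both one-sided counts are $\Theta(n^\ell)$, as your own star example already suggests.

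The missing idea is elementary and makes the spectral machinery unnecessary. A path on $2t$ vertices can be chosen in at most $(2m)^t$ ways, where $m=e(G)$: choose every other edge as an ordered pair. Also $m=O_k(n^{1+1/k})$ by Theorem~\ref{thm:BS}, so $m^{\ell-1}=o(n^\ell)$ because $(\ell-1)(1+1/k)<\ell$ for $\ell<k$. With this, your Step~1 already finishes whenever the chosen non-fat pairs $(v_0,v_1)$ and $(w_j,w_{j+1})$ satisfy $1\le j\le\ell-2$. Deleting $w_0$ and $v_{j+1}$ leaves the paths $v_1w_1\cdots v_jw_j$ and $w_{j+1}v_{j+2}\cdots w_{\ell-1}v_0$, with $2j$ and $2(\ell-1-j)$ vertices. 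So there are at most $(2m)^{\ell-1}(2k)^2=o(n^\ell)$ such cycles. This is the paper's Case~2: both sides are entirely non-fat, and one may take $j=\ell-2$.

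The cases $j=0$ and $j=\ell-1$ cannot be ``handled in the same way''. For $j=0$, bounding $w_0$ through $(v_0,v_1)$ requires $v_1$ to be known, while bounding $v_1$ through $(w_0,w_1)$ requires $w_0$ to be known; $j=\ell-1$ is symmetric with $v_0$ in place of $v_1$. So only one deletion is available, an odd number of vertices remains, and the trivial count is $O(nm^{\ell-1})$, which is not $o(n^\ell)$. Rotation or reflection cannot avoid this. For example, each side may have exactly one non-fat pair, the two interlocked in this way, with all other pairs fat.

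Here the fat pairs must be used, through an ingredient your proposal never invokes. By Lemma~\ref{lem:high-codegree}(ii), the number of triples $(u,v,w)$ with $uv$ fat and $w\in N_G(u)\cap N_G(v)$ is at most $2\sum_{uv\in E(H_{2k})}f(u,v)\le 2(k-2)m$. The paper treats every cycle having a side with both fat and non-fat pairs (Case~1) this way. It normalizes so that $v_{\ell-1}v_0$ is fat and $v_0v_1$ is non-fat. It then chooses $v_1w_1\cdots v_{\ell-2}w_{\ell-2}$ in at most $(2m)^{\ell-2}$ ways, the triple $(v_{\ell-1},v_0,w_{\ell-1})$ in $O(m)$ ways, and $w_0$ in fewer than $2k$ ways. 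This again gives $O(m^{\ell-1})$ cycles. Cases~1 and~2 together cover all cycles in the lemma.
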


\begin{proof}
Let $m=e(G)$. We first record a simple path bound. For every integer
$t\ge1$, the number of paths of length $2t-1$ in $G$ is at most
$(2m)^t$. Indeed, if
$u_1u_2\cdots u_{2t}$ is such a path, then for each $1\le i\le t$, there are at most $2m$ choices for
$(u_{2i-1},u_{2i})$, corresponding to the choice of an edge of $G$
and an order of its endpoints. Ignoring the remaining
adjacency and distinctness conditions gives the required bound.

We divide the cycles counted by the lemma into two cases.

\medskip
\noindent\textbf{Case 1.}
At least one alternating side contains both fat and non-fat pairs.

Along such a side, a fat pair is followed by a non-fat pair. After
interchanging the two alternating sides if necessary and cyclically
relabeling the vertices, we may assume that $v_{\ell-1}v_0$ is fat
and $v_0v_1$ is non-fat.

Choose the path
$v_1w_1v_2w_2\cdots v_{\ell-2}w_{\ell-2}$.
Since its length is $2\ell-5$, there are at most
$(2m)^{\ell-2}$ choices. For an upper bound, we ignore the condition
$w_{\ell-2}v_{\ell-1}\in E(G)$.
Since $v_{\ell-1}v_0$ is fat, we have
$v_{\ell-1}v_0\in E(H_{2k})$, while $w_{\ell-1}$ is a common
neighbor of $v_{\ell-1}$ and $v_0$. Hence the number of possible
triples $(v_{\ell-1},v_0,w_{\ell-1})$ is at most
$2\sum_{uv\in E(H_{2k})}f(u,v)$. By
Lemma~\ref{lem:high-codegree}(ii),
\[
2\sum_{uv\in E(H_{2k})}f(u,v)
\le
(k-2)\sum_{u\in V(H_{2k})}d_G(u)
\le
2(k-2)m.
\]
Since $v_0v_1$ is non-fat, there are fewer than $2k$ choices for
$w_0\in N_G(v_0)\cap N_G(v_1)$.

These choices determine all vertices of the cycle. Therefore the
number of cycles in Case~1 is at most
\[
(2m)^{\ell-2}\cdot2(k-2)m\cdot2k
=
O_{k,\ell}(m^{\ell-1}).
\]

\medskip
\noindent\textbf{Case 2.}
Both alternating sides consist entirely of non-fat pairs.

Fix a cyclic labeling
$v_0w_0v_1w_1\cdots v_{\ell-1}w_{\ell-1}v_0$ of the cycle.
After deleting $w_0$ and $v_{\ell-1}$, the remaining cycle edges form
the paths
$v_1w_1v_2w_2\cdots v_{\ell-2}w_{\ell-2}$ and
$w_{\ell-1}v_0$, of lengths $2\ell-5$ and $1$, respectively.
By the path bound above, these two paths can be chosen in at most
$(2m)^{\ell-2}(2m)=(2m)^{\ell-1}$ ways.

Since $v_0v_1$ and $w_{\ell-2}w_{\ell-1}$ are non-fat, there are
fewer than $2k$ choices for
$w_0\in N_G(v_0)\cap N_G(v_1)$ and fewer than $2k$ choices for
$v_{\ell-1}\in N_G(w_{\ell-2})\cap N_G(w_{\ell-1})$.
These choices determine all vertices of the cycle. Therefore the
number of cycles in Case~2 is at most
\[
(2m)^{\ell-1}(2k)^2
=
O_{k,\ell}(m^{\ell-1}).
\]

Combining the two cases, the number of cycles in question is
$O_{k,\ell}(m^{\ell-1})$. Since $G$ is $C_{2k}$-free,
Theorem~\ref{thm:BS} gives $m=O_k(n^{1+1/k})$. As $\ell<k$,
$
(\ell-1)\left(1+\frac1k\right)<\ell,
$
and therefore $m^{\ell-1}=o(n^\ell)$. This proves the lemma.
\end{proof}

For a real square matrix $X=(X_{ij})$, let
\[
\operatorname{tr}(X)=\sum_i X_{ii},
\qquad
\|X\|_{\mathrm F}
=
\left(\sum_{i,j}X_{ij}^2\right)^{1/2}
\]
denote its trace and Frobenius norm, respectively. We use the following
standard inequalities; see Ford~\cite[Theorems~6.5 and~7.5]{Ford}.

\begin{theorem}\label{thm:matrix}
Let $X$ and $Y$ be real square matrices of the same order. Then
\[
|\operatorname{tr}(XY)|
\le
\|X\|_{\mathrm F}\|Y\|_{\mathrm F},
\qquad
\|XY\|_{\mathrm F}
\le
\|X\|_{\mathrm F}\|Y\|_{\mathrm F}.
\]
\end{theorem}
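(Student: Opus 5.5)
Both inequalities in Theorem~\ref{thm:matrix} follow directly from the Cauchy--Schwarz inequality in $\mathbb{R}^N$. I would write out the entries of the relevant products and then apply Cauchy--Schwarz once to each; there is no need to invoke spectral theory or singular values.

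For the trace inequality, write $X$ and $Y$ as $N\times N$ matrices. Then
\[
\operatorname{tr}(XY)=\sum_{i}\sum_{j}X_{ij}Y_{ji}.
\]
This is the Euclidean inner product of two vectors in $\mathbb{R}^{N^2}$: the vector of entries $(X_{ij})_{i,j}$ and the vector of entries of the transpose, $(Y_{ji})_{i,j}$. Cauchy--Schwarz gives
\[
|\operatorname{tr}(XY)|\le\Bigl(\sum_{i,j}X_{ij}^2\Bigr)^{1/2}\Bigl(\sum_{i,j}Y_{ji}^2\Bigr)^{1/2}.
\]
The last factor equals $\|Y\|_{\mathrm F}$, because reindexing the sum over all pairs $(i,j)$ does not change it.

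For submultiplicativity, start from $(XY)_{ij}=\sum_k X_{ik}Y_{kj}$. For fixed $i$ and $j$, apply Cauchy--Schwarz to the $i$-th row of $X$ and the $j$-th column of $Y$:
\[
(XY)_{ij}^2\le\Bigl(\sum_k X_{ik}^2\Bigr)\Bigl(\sum_{k}Y_{kj}^2\Bigr).
\]
Summing over all $i$ and $j$, the right-hand side factors as
\[
\Bigl(\sum_{i,k}X_{ik}^2\Bigr)\Bigl(\sum_{k,j}Y_{kj}^2\Bigr)=\|X\|_{\mathrm F}^2\|Y\|_{\mathrm F}^2,
\]
and taking square roots gives the claim.

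No step here is a genuine obstacle. The only point needing care is bookkeeping: the transpose-indexed sum in the first inequality is the same as $\|Y\|_{\mathrm F}^2$, and in the second the double sum splits into a product because the $i$-dependence sits only in the $X$ factor and the $j$-dependence only in the $Y$ factor.
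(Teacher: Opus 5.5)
Your proof is correct. The identity $\operatorname{tr}(XY)=\sum_{i,j}X_{ij}Y_{ji}$ writes the trace as the Euclidean inner product of the entry vectors of $X$ and $Y^{T}$, so one Cauchy--Schwarz step gives the first inequality. In the second, the row-by-column Cauchy--Schwarz bound, summed over $(i,j)$, factors exactly as you say.

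The paper does not prove Theorem~\ref{thm:matrix}. It quotes both inequalities as standard facts from Ford's textbook (Theorems 6.5 and 7.5 there). The only difference is that your elementary argument, which is essentially the textbook proof, makes this step self-contained at the cost of a few lines, while the paper saves space by citing it.

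In Lemma~\ref{lem:weighted} the paper also needs $\|A^{\ell-1}\|_{\mathrm F}\le\|A\|_{\mathrm F}^{\ell-1}$. This follows from your second inequality by induction on the exponent, so your argument covers everything the paper uses.
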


\begin{lemma}\label{lem:weighted}
Let $k>\ell\ge3$, and let $G$ be an $n$-vertex $C_{2k}$-free graph.
Then
\[
\sum_{C_\ell\subseteq H_{2k}}
\prod_{uv\in E(C_\ell)}f(u,v)
\le
\left(
\frac{(k-1)_\ell}{2\ell}+o(1)
\right)n^\ell.
\]
\end{lemma}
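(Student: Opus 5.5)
The plan is to view the weighted cycle count as a trace of a weighted adjacency matrix. I would then split the weights into ``very large'' and ``moderate'' codegrees. The moderate part is controlled by Lemma~\ref{lem:high-codegree}(iv) together with the Frobenius inequalities of Theorem~\ref{thm:matrix}. The very large part lives on a bounded vertex set, where Lemma~\ref{lem:cliques} applies with cliques coming from neighbourhoods of vertices of $G$.

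\textbf{Setup and the moderate part.} Let $A$ be the $V(G)\times V(G)$ symmetric matrix with $A_{uv}=f(u,v)$ if $uv\in E(H_{2k})$ and $0$ otherwise. Fix small $\delta>0$ and write $A=B+S$, where $B$ keeps the entries with $f(u,v)\ge\delta n$ and $S$ keeps the rest. All entries are nonnegative. Hence the weighted sum over $\ell$-cycles of $H_{2k}$ using at least one edge of $S$ is at most $\frac1{2\ell}$ times the sum of those terms in the expansion of $\operatorname{tr}((B+S)^\ell)$ containing at least one factor $S$.

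By cyclicity of the trace and Theorem~\ref{thm:matrix}, each such term is bounded by $\|S\|_{\mathrm F}\|A\|_{\mathrm F}^{\ell-1}$, using $\|B\|_{\mathrm F},\|S\|_{\mathrm F}\le\|A\|_{\mathrm F}$. Lemma~\ref{lem:high-codegree}(iv) with $p=n$ gives $\|A\|_{\mathrm F}=O_k(n)$. With $p=\delta n$ it gives
\[
\|S\|_{\mathrm F}^2=O_k\bigl(n^{k/(k-1)}(\delta n)^{(k-2)/(k-1)}\bigr)=O_k\bigl(\delta^{(k-2)/(k-1)}n^2\bigr).
\]
So these cycles contribute $O_{k,\ell}(\delta^{(k-2)/(2k-2)})n^\ell$. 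This is $o(1)n^\ell$ once $\delta\to0$, and $k\ge4$ here, so the exponent is positive.

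\textbf{The very fat part.} It remains to bound the weighted count of $\ell$-cycles all of whose edges lie in $H_{\delta n}$. By Lemma~\ref{lem:high-codegree}(iii), $e(H_{\delta n})=O_k(\delta^{-k/(k-1)})$, so $V_0:=V(H_{\delta n})$ has size $K\le K(\delta)$, which is bounded independently of $n$. Fix $\varepsilon>0$ and put $\eta=\varepsilon/K^2$. Let $F$ be the graph on $V_0$ whose edges are the pairs with $f\ge\eta n$.

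Then $F\subseteq H_{2k}$ for large $n$, so $F$ is $C_k$-free by Lemma~\ref{lem:high-codegree}(i). Moreover $H_{\delta n}\subseteq F$, so every relevant cycle is a cycle of $F$. For $x\in V(G)$, let $T_x=N_G(x)\cap V_0$. Call $x$ \emph{bad} if $T_x$ contains a pair that is not an edge of $F$. The number of bad $x$ is at most the sum of $f$ over non-edges of $F$ in $V_0$, which is less than $K^2\eta n=\varepsilon n$.

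Set $Q_x=T_x$ for good $x$ and $Q_x=\varnothing$ for bad $x$. These are cliques of $F$, and $p=n$. The resulting $c$ satisfies $c(u,v)\le f(u,v)\le c(u,v)+\varepsilon n$ on $E(F)$.

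\textbf{Combining.} Expanding $\prod(c+\varepsilon n)$ over the at most $K^\ell$ cycles of $F$, Lemma~\ref{lem:cliques} gives
\[
\sum_{C_\ell\subseteq H_{\delta n}}\prod f\le\frac{(k-1)_\ell}{2\ell}n^\ell+K^\ell 2^\ell\varepsilon n^\ell .
\]
Choosing first $\delta$ and then $\varepsilon\ll K(\delta)^{-\ell}$ makes both error terms at most any prescribed multiple of $n^\ell$, which proves the lemma.

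\textbf{Main obstacle.} The delicate point is exactly this order of quantifiers. The neighbourhoods $N_G(x)$ are not cliques of $H_{\delta n}$ itself. Using the threshold $\delta n$ directly would lose $K^2\delta n$ in each factor, which need not be small because $K$ grows as $\delta\to 0$. The two-threshold trick (vertex set from $\delta$, adjacency from $\eta\ll K^{-2}$) is what I expect to make the argument close.
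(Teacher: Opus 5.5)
Your argument is correct, and its architecture is essentially the paper's. Both use a Frobenius-norm truncation of the moderate codegrees via Lemma~\ref{lem:high-codegree}(iv) and Theorem~\ref{thm:matrix}, and then apply Lemma~\ref{lem:cliques} to the sets $N_G(x)\cap V_0$ on a small vertex set. The differences are in the bookkeeping, and one of them bears directly on the obstacle you single out.

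First, the paper truncates at the growing threshold $q=n/\log n$. This gives $\|D\|_{\mathrm F}=o(n)$ outright, and the single trace $\operatorname{tr}(DA^{\ell-1})$ disposes of all cycles leaving $V(H_q)$. You truncate at a fixed $\delta n$, which keeps $V_0$ bounded but forces the nested choice of $\delta$, then $\varepsilon$, then $n$.

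Second, the intermediate threshold $\eta n$ is unnecessary. The paper takes the ambient $C_k$-free graph to be $H_{2k}$ itself, restricted to the small vertex set. A non-edge $uv$ of that graph has $f(u,v)<2k$, so the number of bad $x$ is at most $(2k-1)\binom{|V_0|}{2}$. In your setting this is $O_\delta(1)$ rather than $\varepsilon n$, and cycles of $H_{\delta n}$ are automatically cycles of this graph. The paper then replaces each bad neighbourhood by the two-vertex cliques $\{u,v\}$ of its fat pairs. This gives $c(u,v)=f(u,v)$ exactly with $p=n+o(n)$, so no expansion of $\prod(c+\varepsilon n)$ is needed.

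Your $\eta$-version also works. You implicitly need $\eta\le\delta$ to get $H_{\delta n}\subseteq F$, which holds once $\varepsilon$ is small. The cost is that you replace an exact multiplicity count by an approximation whose error has to be removed with an extra parameter.
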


\begin{proof}
Let $q=n/\log n$. For all sufficiently large $n$, we have
$2k\le q\le n$.

Define two symmetric matrices
$A=(a_{uv})_{u,v\in V(G)}$ and
$D=(d_{uv})_{u,v\in V(G)}$ by setting $a_{uu}=d_{uu}=0$ and, for
distinct $u,v\in V(G)$,
\[
a_{uv}=
\begin{cases}
f(u,v),& f(u,v)\ge2k,\\
0,& f(u,v)<2k,
\end{cases}
\qquad
d_{uv}=
\begin{cases}
f(u,v),& 2k\le f(u,v)<q,\\
0,& \text{otherwise}.
\end{cases}
\]

By Lemma~\ref{lem:high-codegree}(iv),
\[
\|A\|_{\mathrm F}^2
=
2\sum_{uv\in E(H_{2k})}f(u,v)^2
=
O_k(n^2).
\]
Applying the same estimate with $p=q$ gives
\[
\|D\|_{\mathrm F}^2
=
2\sum_{\substack{uv\in E(H_{2k})\\ f(u,v)<q}}f(u,v)^2
=
O_k\!\left(
n^{k/(k-1)}q^{(k-2)/(k-1)}
\right)
=
o(n^2).
\]
Thus $\|A\|_{\mathrm F}=O_k(n)$ and
$\|D\|_{\mathrm F}=o(n)$. The factor $2$ above comes from the two
symmetric matrix entries corresponding to each edge.

Let $\mathcal C$ be the family of copies
$C_\ell\subseteq H_{2k}$ that are not contained in
$H_{2k}[V(H_q)]$. For each $C\in\mathcal C$, choose a vertex
$x\in V(C)\setminus V(H_q)$. Since isolated vertices are omitted from
$H_q$, no pair $xy\in E(C)$ incident with $x$ belongs to $E(H_q)$.
As $C\subseteq H_{2k}$, such a pair satisfies
\[
2k\le f(x,y)<q.
\]
Choose one of the two edges of $C$ incident with $x$, say $xy$.
Consider the two possible orders of the endpoints of $xy$. For each
order, write the vertices of $C$ consecutively as
$x_1,\ldots,x_\ell$, where $x_{\ell+1}=x_1$ and
$\{x_1,x_2\}=\{x,y\}$. Then
$d_{x_1x_2}=f(x_1,x_2)$, while
$a_{x_ix_{i+1}}=f(x_i,x_{i+1})$ for every $2\le i\le\ell$.
Therefore
\[
2\sum_{C\in\mathcal C}
\prod_{uv\in E(C)}f(u,v)
\le
\sum_{x_1,\ldots,x_\ell\in V(G)}
d_{x_1x_2}
\prod_{i=2}^{\ell}a_{x_ix_{i+1}}
=
\operatorname{tr}(DA^{\ell-1}).
\]
By Theorem~\ref{thm:matrix},
\[
\sum_{C\in\mathcal C}
\prod_{uv\in E(C)}f(u,v)
\le
\frac12\|D\|_{\mathrm F}\|A^{\ell-1}\|_{\mathrm F}
\le
\frac12\|D\|_{\mathrm F}\|A\|_{\mathrm F}^{\ell-1}
=
o(n^\ell).
\]
Hence
\begin{equation}\label{eq:truncation}
\sum_{C_\ell\subseteq H_{2k}}
\prod_{uv\in E(C_\ell)}f(u,v)
=
\sum_{C_\ell\subseteq H_{2k}[V(H_q)]}
\prod_{uv\in E(C_\ell)}f(u,v)
+
o(n^\ell).
\end{equation}

It remains to estimate the sum over
$H_{2k}[V(H_q)]$. Since $H_q$ has no isolated vertices,
Lemma~\ref{lem:high-codegree}(iii) gives
\begin{equation}\label{eq:Vq}
|V(H_q)|
\le
2e(H_q)
=
O_k\!\left((\log n)^{k/(k-1)}\right).
\end{equation}

For each $x\in V(G)$, let
\[
R_x=N_G(x)\cap V(H_q),
\]
and let $B$ be the set of vertices $x$ for which $R_x$ is not a clique
in $H_{2k}$, where the empty set and a single vertex are also regarded
as cliques. For each $x\in B$, choose distinct
$u_x,v_x\in R_x$ such that $u_xv_x\notin E(H_{2k})$. Then
$f(u_x,v_x)<2k$, while $x\in N_G(u_x)\cap N_G(v_x)$.

For a fixed pair $\{u,v\}\subseteq V(H_q)$, there are exactly
$f(u,v)$ common neighbors of $u$ and $v$. Hence
\[
|B|
\le
\sum_{\substack{\{u,v\}\in\binom{V(H_q)}{2}\\ f(u,v)<2k}}
f(u,v)
\le
(2k-1)\binom{|V(H_q)|}{2}
=
O_k\!\left(|V(H_q)|^2\right).
\]

We now construct an indexed family of cliques in
$H_{2k}[V(H_q)]$. For each $x\notin B$, include the clique $R_x$.
For each $x\in B$ and each edge
$uv\in E(H_{2k}[R_x])$, include the two-vertex clique $\{u,v\}$,
labeled by $(x,uv)$. Cliques with different labels are regarded as
different members of the family. Write the resulting family as
$Q_1,\ldots,Q_p$.
We have
\[
p
=
n-|B|
+
\sum_{x\in B}e\bigl(H_{2k}[R_x]\bigr).
\]
Since
$e(H_{2k}[R_x])\le\binom{|V(H_q)|}{2}$ for every $x\in B$, the
preceding bound on $|B|$ and \eqref{eq:Vq} give
\begin{align*}
|p-n|
\le
|B|
+
\sum_{x\in B}e\bigl(H_{2k}[R_x]\bigr)\le
|B|
+
|B|\binom{|V(H_q)|}{2}=
O_k\!\left(|V(H_q)|^4\right)
=
o(n).
\end{align*}
Thus
\begin{equation}\label{eq:p-asymptotic}
p=n+o(n).
\end{equation}

Fix \(uv\in E(H_{2k}[V(H_q)])\). We claim that
\[
\bigl|\{a\in\{1,\ldots,p\}:u,v\in Q_a\}\bigr|
=
f(u,v).
\]
Indeed, let \(x\in N_G(u)\cap N_G(v)\). Then \(u,v\in R_x\). If
\(x\notin B\), the clique \(R_x\) is included in the family and
contains both \(u\) and \(v\). If \(x\in B\), then
\(uv\in E(H_{2k}[R_x])\), so the clique \(\{u,v\}\) labeled by
\((x,uv)\) is included. Thus each common neighbor of \(u\) and \(v\)
gives exactly one member of the family containing both vertices.
Conversely, every such member arises from a common neighbor of
\(u\) and \(v\). This proves the claim.

By Lemma~\ref{lem:high-codegree}(i), \(H_{2k}\) is \(C_k\)-free, and
hence so is \(H_{2k}[V(H_q)]\). Therefore Lemma~\ref{lem:cliques},
together with the claim above and \eqref{eq:p-asymptotic}, gives
\[
\sum_{C_\ell\subseteq H_{2k}[V(H_q)]}
\prod_{uv\in E(C_\ell)}f(u,v)
\le
\frac{(k-1)_\ell}{2\ell}p^\ell
=
\left(
\frac{(k-1)_\ell}{2\ell}+o(1)
\right)n^\ell.
\]
Combining this with \eqref{eq:truncation} proves the lemma.
\end{proof}

\begin{proof}[Proof of Theorem~\ref{thm:main}]
The lower bound follows from the construction
$K_{k-1,n-k+1}$ of Gerbner et al.~\cite{GGMV}, as recalled in the
Introduction.

For the upper bound, the case $\ell=2$ follows from~\cite{GGMV}.
Assume that $k>\ell\ge3$, and let $G$ be an $n$-vertex
$C_{2k}$-free graph. By Lemma~\ref{lem:nonfat}, all but
$o(n^\ell)$ copies of $C_{2\ell}$ in $G$ have a fat alternating side.

Choose one fat alternating side from each such cycle. Fix a selected
side $C_\ell\subseteq H_{2k}$. For each $uv\in E(C_\ell)$, the
opposite alternating side contains a common neighbor of $u$ and $v$.
Hence the number of cycles assigned to $C_\ell$ is at most
$
\prod_{uv\in E(C_\ell)}f(u,v),
$
where choices with repeated vertices or not forming a cycle may also
be counted.
Consequently,
\begin{align*}
N(C_{2\ell},G)
\le
o(n^\ell)
+
\sum_{C_\ell\subseteq H_{2k}}
\prod_{uv\in E(C_\ell)}f(u,v)\le
\left(
\frac{(k-1)_\ell}{2\ell}+o(1)
\right)n^\ell,
\end{align*}
where the last inequality follows from Lemma~\ref{lem:weighted}.
This completes the proof.
\end{proof}

\section*{Declaration on the Use of Generative AI}
The authors used ChatGPT 5.6 Pro to assist in discussing proof strategies, checking proofs, and improving exposition.

\end{document}